\def\Bbb{\mathbb}
\def\Cal{\mathcal}
\def\Dt{\partial_t}
\def\eb{\varepsilon}
\def\R {\mathbb{R}}
\def\<{\left<}
\def\>{\right>}
\def\Nx{\px}
\def\Dx{\px^2}
\def\({\left(}
\def\){\right)}
\def\Cal{\mathcal}
\def\Bbb{\mathbb}
\newtheorem{proposition}{Proposition}[section]
\newtheorem{theorem}[proposition]{Theorem}
\newtheorem{lemma}[proposition]{Lemma}
\theoremstyle{definition}
\newtheorem{definition}[proposition]{Definition}
\newtheorem{remark}[proposition]{Remark}
\numberwithin{equation}{section}
\def \no#1#2#3 {{\bf #1} (#3), #2.}
\def \eds#1#2#3 {#1, #2, #3.}
\title[Inertial manifolds for convective RDEs] {Inertial manifolds for 1D  reaction-diffusion-advection systems. Part I: Dirichlet and Neumann boundary conditions}
\author[A. Kostianko and S. Zelik]{ Anna Kostianko${}^1$ and Sergey Zelik${}^1$}
\address{${}^1$
University of Surrey, Department of Mathematics,
Guildford, GU2 7XH, United Kingdom, a.kostianko@surrey.ac.uk, s.zelik@surrey.ac.uk.}
\subjclass[2000]{35B40, 35B45}
\keywords{Inertial manifolds, convective reaction-diffusion equation}
\thanks{This work is partially supported by  the grants  14-41-00044 and 14-21-00025 of RSF as well as  grants 14-01-00346  and 15-01-03587 of RFBR}
\begin{document}
\begin{abstract} This is the first part of our study of  inertial manifolds for the system of 1D reaction-diffusion-advection equations which is devoted to the case of  Dirichlet or Neumann  boundary conditions. Although this problem does not initially possess the spectral gap property, it is shown that this property is satisfied after the proper non-local change of the dependent variable. The case of periodic boundary conditions where the situation is principally different and the inertial manifold may not exist is considered in the second part of our study.
\end{abstract}
\maketitle
\tableofcontents
\def\R{\Bbb R}
\def\Dt{\partial_t}
\def\Dx{\partial^2_x}
\def\Nx{\partial_x}
\def\eb{\varepsilon}

\section{Introduction}\label{s0}
It is believed that the long-time behavior of many dissipative PDEs in bounded domains is essentially finite-dimensional. Thus, despite of the infinite-dimensionality of the initial phase space, the reduced dynamics on the so-called global attractor can be effectively described by finitely many parameters. This conjecture is partially supported by the fact that this global attractor usually has finite Hausdorff and box-counting dimensions and, by the Man\'e projection theorem, can be embedded by a H\"older continuous homeomorphism into the finite-dimensional plane of the phase space. In turn, this allows us to describe the reduced dynamics on the attractor in terms of a finite system of ODEs - the so-called inertial form of the PDE considered, see \cites{bv1,cv,he,tem,MirZe,28,Zel2} and references therein.
\par
Unfortunately, the reduction based on the Man\'e projection theorem guarantees only the H\"older continuity of the vector field in the above mentioned inertial form although the regularity of this vector field seems to be crucial here. Indeed, as recent counterexamples show (see \cites{EKZ,Zel2}), this vector field cannot be made Lipschitz or log-Lipschitz continuous in general and this lack of regularity may lead to actual infinite-dimensionality of the reduced dynamics on the attractor despite the fact that the attractor has finite box-counting dimension. By this reason, understanding under what extra assumptions the considered PDE possesses an inertial form with more regular vector field becomes a central problem of the theory.
\par
An ideal situation arises when the considered PDE possesses the so-called {\it inertial manifold} (IM) which is finite-dimensional invariant manifold of the phase space with exponential tracking (asymptotic phase) property which contains the global attractor. In this case, the desired inertial form is constructed by restricting the initial PDE to the manifold and has  smoothness of the IM (usually $C^{1+\eb}$), see \cites{FST,mik,rom-man,Zel2} and references therein. However, the standard theory of IMs requires the so-called  {\it spectral gap} assumption which looks very restrictive and is not satisfied in many physically relevant examples, see also \cites{kwean,mal-par,KZ1,K1,Zel2} for the examples where the spectral gap assumption may be relaxed using the so-called spatial averaging principle. By this reason, a lot of efforts has been made in order to improve the regularity of the inertial form without referring to the IMs, see \cites{28,Zel2} and references therein. One of the most interesting attempts (from our point of view) is the so-called Romanov theory which gives necessary and sufficient conditions for the {\it Lipschitz} continuous embeddings of the attractor to finite-dimensional spaces and allows us to construct Lipschitz continuous inertial forms, see \cites{rom-th,rom-th1}. The key application of this theory is related with the 1D system of  reaction-diffusion-advection (RDA) equations:
\begin{equation}\label{0.1}
\Dt u=\Dx u+f(u,u_x), \ \ u=(u_1,\cdots,u_m)
\end{equation}
which is considered on the interval $x\in(0,L)$ and is endowed by the proper boundary conditions. On the one hand, the spectral gap condition is not satisfied for this equation and the existence of the IM has not been proved before. On the other hand, the Romanov theory allows us to build up the Lipschitz continuous inertial form for equation \eqref{0.1} (at least in the scalar case $m=1$) under more or less general assumptions on the nonlinearity $f$, see also \cite{Kuk}. This example might be treated as an indication that a reasonable theory may be developed beyond the inertial manifolds. We mention here also an interesting attempt to extend the theory to the case of log-Lipschitz Man\'e projections and log-Lipschitz inertial forms, see \cite{28}, as well as the counterexamples to the existence of the IM in slightly more general than \eqref{0.1} classes of PDEs, see \cite{rom3}.
\par
The main aim of our study  is to give a comprehensive analysis of the existence and non-existence of inertial manifolds  for systems of 1D RDA equations. As we will see, the answer on the question about the existence of an IM strongly depends on the choice of boundary conditions.
\par
 In the present paper, we study mainly the case of Dirichlet boundary conditions where we establish the existence of IMs under more or less general assumptions on the nonlinearities. The case of Neumann boundary conditions is then reduced to the Dirichlet one using the trick with differentiation of our equations in space, see Section \ref{s4} for more details. The case of periodic boundary conditions is more delicate and is considered in the second part of our study, see \cite{KZII}. As shown there, for periodic boundary conditions we may guarantee the existence of IMs for the scalar case ($m=1$) only and will give the counterexamples which show that an IM may not exist in the vector case ($m>1$). Thus, similarly to all reasonable examples known to the authors (including the Romanov theory for scalar reaction-diffusion-advection equations), good properties of the inertial forms here are also related with the existence of an IM. This somehow confirms the conjecture stated in \cite{Zel2} that the existence of an IM gives a {\it sharp} borderline between the finite and infinite-dimensional dynamics arising in dissipative PDEs.
\par
 To be more precise, in the present paper we consider the following reaction-diffusion-advection problem:
\begin{equation}\label{1}
\Dt u+f(u)\Nx u-\Dx u+g(u)=0,\ x\in(0,L),\ \ u\big|_{x=0}=u\big|_{x=L}=0,
\end{equation}
where $u=(u_1(t,x),\cdots,u_m(t,x))$ is an unknown vector-valued function and $f$ and $g$ are given smooth functions with finite support. Thus, we have assumed from the very beginning that
the nonlinearities are already cut-off outside of the global attractor $\mathcal A$ (which is a subset of $C^1$) and do not specify more or less general  assumptions on $f$ and $g$ which guarantees global solvability, dissipativity and the existence of such global attractor, see e.g. \cites{bv1,BPZ,he,tem} and references therein for more details on this topic.
\par
 We also mention that the eigenvalues of the Laplacian in this case are $\lambda_n:=\(\frac{\pi}L\)^2n^2$ and the spectral gap condition for this equation essentially reads
\begin{equation}\label{2}
\frac{\lambda_{n+1}-\lambda_n}{\lambda_n^{1/2}+\lambda_{n+1}^{1/2}}=\frac\pi L \ge L_f,
\end{equation}
where $L_f$ is a constant related with the Lipschitz constant of the function $f$ (the Lipschitz constant of nonlinearity $g$ is not essential here, so we prefer to state the spectral gap assumption for the particular case $g=0$). Therefore, the standard methods give the existence of IM only under the assumption that the nonlinearity $f$ is small enough.
\par
The main idea of our method is to transform equation \eqref{1} (using the appropriate non-local in space change of the independent variable $u$) in such way that the obtained new equation will have {\it small} nonlinearity $f$. To this end, we set
\begin{equation}\label{3}
u(t,x)=a(t,x)v(t,x),
\end{equation}
where $a(t,x)\in GL(m)$ is a matrix depending on the solution $u$. Then, equation \eqref{1} reads
\begin{multline}\label{4}
\Dt v-\Dx v=a^{-1}[-f(av)a+2\Nx a]\Nx v+\\+[a^{-1}\Dx a -a^{-1}\Dt a -a^{-1}f(av)\Nx a ]v-a^{-1}g(av).
\end{multline}
The naive way to remove the term $\Nx v$ would be to fix the matrix $a=a(v)$ as a solution of the following ODE:
\begin{equation}\label{5}
\Nx a=\frac12 f(av)a,\ \ a\big|_{x=0}=\operatorname{Id}.
\end{equation}
However, in this case
$$
\Dx a=\frac14 f(av)^2a+\frac12 f'(av)(\Nx av+a\Nx v)a
$$
and we see that the remaining terms in the RHS of \eqref{4} will lose smoothness due to the presence of the term $\Nx v$ (the analogous thing happens also with $\Dt a$) and we end up with similar to \eqref{2} spectral gap condition for equation \eqref{4} which is again not satisfied in general. It worth mentioning that this "naive" method may work if the nonlinearity $f(u)$ is non-local and smoothing. Then, the above mentioned terms remain of order zero and the convective terms can be completely removed, see \cites{Vuk1,Vuk2} for the application of this method to the so-called Smoluchowski equation. Similar idea based on the Cole-Hopf transform has been applied in \cite{Vuk3} to the case of Burgers equation with low-wavenumber instability.
\par
Crucial observation which allows us to handle the general case of equation \eqref{1} is that we {\it need not} to remove the gradient term $\partial_x v$  in the RHS of \eqref{4} completely. Instead, as  mentioned before, it is enough to make it {\it small}
and this is possible to do in such way that $a=a(v)$ will be {\it smoothing} operator and the problem with the other terms in the RHS of \eqref{4} will not arise. Namely, as will be shown, we may modify equation \eqref{5} as follows:
\begin{equation}\label{6}
\Nx a=\frac12 f(P_K(av))a,\ \ a\big|_{x=0}=\operatorname{Id},
\end{equation}
where $P_K$ is an orthoprojector on the first $K$ eigenvalues of the Laplacian $-\Dx$ on $(0,L)$ with Dirichlet boundary conditions and $K\gg1$ (other smoothing operators are also possible). Then, the spectral gap assumption will be satisfied for the modified equation \eqref{4} and, since this equation is equivalent to the initial problem \eqref{1}, we end up with the following theorem which can be treated as the main result of the paper.

\begin{theorem}\label{TH0.main} Let the nonlinearities $f$ and $g$ be smooth and have finite supports. Then equation \eqref{1} possesses an IM in the phase space $H^1_0(0,L)$ and this manifold is $C^{1+\eb}$-smooth where $\eb>0$ is small enough.
\end{theorem}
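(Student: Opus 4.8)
The plan is to realize the strategy outlined in the introduction: perform the substitution $u=a(v)v$ with $a=a(v)$ determined by the non-local relation \eqref{6}, show that the resulting equation \eqref{4} for $v$ satisfies the spectral gap condition \eqref{2} with an arbitrarily small effective Lipschitz constant, invoke the standard inertial manifold theorem, and finally transport the manifold back to the $u$-variable through the substitution. Throughout I use that $f,g$ are smooth with finite support, so that together with the cut-off outside $\A$ all the nonlinear terms are globally bounded and Lipschitz and the solution (hence $u=av$) stays in a fixed bounded ball of $C^1$.

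First I would establish that \eqref{6} defines a well-behaved map $v\mapsto a(v)$. For fixed $v\in H^1_0$ the right-hand side $\tfrac12 f(P_K(av))a$ is smooth in $x$, since $P_K(av)$ lies in the finite-dimensional span of the first $K$ smooth Dirichlet eigenfunctions; thus \eqref{6} is a linear ODE in $x$ with smooth coefficient, uniquely solvable, with $a,a^{-1}\in GL(m)$ bounded uniformly on the relevant set. The decisive gain over the naive choice \eqref{5} is that differentiating \eqref{6} no longer loses regularity: since $\Nx\big(P_K(av)\big)$ is again a combination of smooth eigenfunctions, one gets $\Dx a=\tfrac12 f'(P_K(av))\Nx\big(P_K(av)\big)a+\tfrac14 f(P_K(av))^2a$ with no $\Nx v$ term, and similarly $\Dt a$ is obtained by differentiating \eqref{6} in $t$ and substituting $P_K\Dt(av)$ expressed through the equation itself, where $P_K$ turns the dangerous $\Dx u$ into a bounded quantity. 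Hence $a,\Nx a,\Dx a,\Dt a$ are all controlled without derivative loss and $v\mapsto a(v)$ is in fact a smoothing map on $H^1_0$; this part requires careful bookkeeping of the coupled dependence of $a$ and $\Dt a$ on $v$ and $\Dt v$, which I would resolve by an implicit-function/fixed-point argument exploiting that every occurrence of $a$ enters through $P_K$ and is therefore smoothing.

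Next I would verify the spectral gap. With the choice \eqref{6} the convective coefficient in \eqref{4} collapses to $a^{-1}\big[f(P_K(av))-f(av)\big]a$, whose norm is bounded by $CL_f\,\|(\operatorname{Id}-P_K)(av)\|_{L^\infty}$. Since $av=u$ ranges over a bounded set of $C^1$ functions, $\|(\operatorname{Id}-P_K)(av)\|_{L^\infty}\to0$ as $K\to\infty$ uniformly, so the effective Lipschitz constant of the gradient term can be made as small as we like by enlarging $K$; the remaining zeroth-order terms $[a^{-1}\Dx a-a^{-1}\Dt a-a^{-1}f(av)\Nx a]v-a^{-1}g(av)$ are, by the previous step, globally Lipschitz maps on $H^1_0$ without derivative loss and hence harmless for the gap. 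Because the spectral gap $\tfrac\pi L$ of $-\Dx$ on $(0,L)$ in \eqref{2} is a fixed positive number independent of $K$, condition \eqref{2} holds for \eqref{4} once $K$ is large enough, and the standard inertial manifold theorem yields a $C^{1+\eb}$ IM for the transformed equation.

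Finally I would transport the manifold back. Observe that since $av=u$ one has $P_K(av)=P_Ku$, so \eqref{6} is in fact driven by the original variable and the inverse substitution $u\mapsto a^{-1}u$, with $a=a(u)$ solving $\Nx a=\tfrac12 f(P_Ku)a$, $a\big|_{x=0}=\operatorname{Id}$, is explicit and smoothing. Hence the map $\Phi\colon v\mapsto a(v)v$ is a $C^1$-diffeomorphism of a neighbourhood of the absorbing set in $H^1_0$ conjugating the semiflow of \eqref{4} to that of \eqref{1}, and it therefore carries the IM of \eqref{4} onto an invariant manifold of \eqref{1} inheriting finite dimension, exponential tracking and $C^{1+\eb}$ smoothness, and containing $\A$. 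The main obstacle I anticipate is the construction and $C^1$-control of $a(v)$ in the second step, and in particular of $\Dt a$, without losing a spatial derivative, since $\Dt a$ feeds back through the equation into $\Dt v$; the resolution rests entirely on the smoothing character of $P_K$, which is exactly why the modified relation \eqref{6} succeeds where \eqref{5} fails.
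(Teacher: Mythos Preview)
Your outline matches the paper's proof closely: the change of variables via \eqref{6}, the splitting of the right-hand side of \eqref{4} into a small convective term and a remainder without derivative loss, and the transport back through the diffeomorphism $u=a(v)v$ are exactly the paper's route. Two points, however, need correction.

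First, equation \eqref{6} is \emph{not} a linear ODE in $a$ for fixed $v$: the unknown $a$ sits inside the argument of $f$ through $P_K(av)$. The paper obtains existence by a Leray--Schauder argument and uniqueness only for $K$ large enough (depending on $\|v\|_{H^1}$), via the smallness estimate $\|Q_K(\bar a v)\|_{L^2}\le CK^{-1/2}\|v\|_{H^1}\|\bar a\|_{H^1}$. Your later remark about a fixed-point argument is the right instinct, but the ``linear ODE, uniquely solvable'' claim is incorrect as stated. (By contrast, the \emph{inverse} direction $u\mapsto a$ via $\Nx a=\tfrac12 f(P_Ku)a$ \emph{is} linear in $a$, exactly as you observe; this asymmetry is what makes the diffeomorphism easy to invert.)

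Second, and more substantively, ``harmless for the gap'' conceals the decisive quantitative step. The zeroth-order remainder $F_2(v)=[a^{-1}\Dx a-a^{-1}\Dt a-a^{-1}f(av)\Nx a]v-a^{-1}g(av)$ is indeed Lipschitz from $H^1_0$ to $H^1$, but its Lipschitz constant $C_K$ \emph{grows} with $K$: the bounds on $\Dx a$ and $\Dt a$ involve $P_K$ applied to low-regularity quantities and are not uniform in $K$. The paper therefore runs a two-parameter argument: first fix $K$ large so that the convective Lipschitz constant $CK^{-1/2}$ beats the scale-invariant gap $\pi/L$ in \eqref{2}; then, with $K$ frozen, choose the manifold dimension $n=n(K)$ so that $\lambda_{n+1}-\lambda_n\sim n$ dominates $C_K$ (this is the spectral gap condition for nonlinearities mapping $H^1_0\to H^1_0$, which is \emph{different} from \eqref{2}). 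Your sentence ``condition \eqref{2} holds for \eqref{4} once $K$ is large enough'' addresses only the first step; without the second the argument does not close, since enlarging $K$ to shrink $F_1$ simultaneously inflates $F_2$.
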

The paper is organized as follows. The properties of the diffeomorphism generated by equation \eqref{6} are studied in Section \ref{s1}. The transformation of equation \eqref{2} to the analogous equation with respect to the new dependent variable $v$ is made in Section \ref{s2.5}. The properties of the nonlinearities involved in this equation are also studied there. The main theorem on the existence of an inertial manifold is proved in Section \ref{s2}. Finally, the cases of Neumann boundary conditions as well as  equations of the form \eqref{0.1} are considered in Section \ref{s4}.
\par
We mention also that our method of constructing the IM for systems of RDA equations is crucially based on the fact that the diffusion matrix is {\it scalar}  and does not work at least in a straightforward way  even in the case of diagonal diffusion matrix. So, the question of existence or non-existence of IMs for such systems remains open.

\section{The Auxiliary Diffeomorphism}\label{s1}
The aim of this section is to study the change of variables generated by equation \eqref{6} and formula $u=a(v)v$.
We start with the basic properties of solutions of equation \eqref{6}.

\begin{lemma}\label{Lem1} Let the above assumptions hold.  Then, for any $v\in H^1(0,L)$ and any $K\in\Bbb N$, there exists at least one solution $a:[0,L]\to GL(m,\R)$ and the following estimate holds:
\begin{equation}\label{7}
\|a\|_{W^{1,\infty}}+\|a^{-1}\|_{W^{1,\infty}}\le C,
\end{equation}
where the constant $C$ is independent of $K$ and $v$. Moreover, for sufficiently large $K\ge K_0(\|v\|_{H^1})$, the solution is unique.
\end{lemma}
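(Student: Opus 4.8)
The plan is to recast \eqref{6} as a fixed-point problem on a finite-dimensional space. Let $e_1,\dots,e_K$ be the orthonormal Dirichlet eigenfunctions of $-\Dx$, so that $P_K(av)$ is determined by the coefficient vector $\xi=(\xi_1,\dots,\xi_K)$ with $\xi_n=\l av,e_n\r\in\R^m$. For a frozen $\xi$ the matrix $w_\xi:=\sum_{n=1}^K\xi_n e_n$ is a fixed smooth function, so $\Nx a=\frac12 f(w_\xi)a$, $a|_{x=0}=\operatorname{Id}$, is a genuine linear ODE with continuous coefficients, possessing a unique solution $a_\xi$ that depends continuously on $\xi$. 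I would then set $\Phi(\xi)_n:=\l a_\xi v,e_n\r$ and seek a fixed point $\Phi(\xi)=\xi$, which is exactly a solution of \eqref{6}.

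First I would establish \eqref{7}, since the same estimates feed the fixed-point argument. Writing $M:=\|f\|_{L^\infty}$ (finite since $f$ has finite support) and using $\|f(w_\xi)\|\le M$ pointwise, Gronwall's inequality applied to $\Nx a=\frac12 f(\cdots)a$ gives $\|a\|_{L^\infty}\le e^{ML/2}$; differentiating $a^{-1}a=\operatorname{Id}$ shows $\Nx a^{-1}=-\frac12 a^{-1}f(\cdots)$, so the same argument gives $\|a^{-1}\|_{L^\infty}\le e^{ML/2}$, and the two ODEs then bound the derivatives, yielding \eqref{7} with a constant depending only on $M$ and $L$, hence independent of $K$ and $v$. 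Liouville's formula $\det a(x)=\exp(\frac12\int_0^x\operatorname{tr}f(\cdots)\,ds)\neq0$ guarantees $a(x)\in GL(m,\R)$. With these uniform bounds, Bessel's inequality gives $\|\Phi(\xi)\|_{\ell^2}\le\|a_\xi v\|_{L^2}\le e^{ML/2}\|v\|_{L^2}$, so $\Phi$ maps the closed ball of radius $e^{ML/2}\|v\|_{L^2}$ in $(\R^m)^K$ into itself. As this ball is convex and compact and $\Phi$ is continuous, Brouwer's fixed-point theorem yields at least one solution, proving existence for every $K$.

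For uniqueness at large $K$, the key idea is to view \eqref{6} as a perturbation of the \emph{local} equation \eqref{5}, $\Nx\bar a=\frac12 f(\bar a v)\bar a$, which is an ODE Lipschitz in $\bar a$ and hence uniquely solvable. Given two solutions $a^1,a^2$ of \eqref{6}, I would subtract the equations and write the difference $\phi=a^1-a^2$ (note $\phi|_{x=0}=0$) as a linear nonlocal equation $\phi=\mathcal{K}_K\phi$, where $\mathcal{K}_K=\mathcal{J}\,B\,P_K M_v$: here $M_v$ is multiplication by $v$, $B$ is a bounded multiplier built from $\int_0^1 f'(\cdots)\,dt$ and $a^1$, and $\mathcal{J}$ is the Volterra solution operator of the homogeneous linear equation $\Nx\phi=\frac12 f(P_K(a^2v))\phi$. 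The crucial point is that as $K\to\infty$ the operator $\mathcal{K}_K$ converges, in the operator norm on $H^1(0,L)$, to the \emph{causal} Volterra operator $\mathcal{K}_\infty$ obtained by dropping $P_K$ (and passing to the limit in $\mathcal{J}$): using that $H^1$ is an algebra in one dimension together with the smoothing bound $\|(I-P_K)g\|_{L^2}\le\lambda_{K+1}^{-1/2}\|g\|_{H^1}$ for $g\in H^1_0$, one obtains $\|\mathcal{K}_K-\mathcal{K}_\infty\|_{H^1\to H^1}\le C\|v\|_{H^1}\,\lambda_{K+1}^{-1/2}$. This is precisely where $K$ must be taken large relative to $\|v\|_{H^1}$, producing the threshold $K_0(\|v\|_{H^1})$.

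Since $\mathcal{K}_\infty$ is a Volterra operator, the homogeneous equation $\phi=\mathcal{K}_\infty\phi$ is equivalent to a linear ODE with zero initial data and thus forces $\phi\equiv0$; as $\mathcal{K}_\infty$ is moreover compact (because $H^1\hookrightarrow L^2$ compactly), $I-\mathcal{K}_\infty$ is invertible, with a bound on $\|(I-\mathcal{K}_\infty)^{-1}\|$ depending only on $M$, $\|f'\|_{L^\infty}$ and $\|v\|_{H^1}$ through the Gronwall estimate for the associated ODE. Choosing $K_0$ so that $\|\mathcal{K}_K-\mathcal{K}_\infty\|<\|(I-\mathcal{K}_\infty)^{-1}\|^{-1}$ for $K\ge K_0$ keeps $I-\mathcal{K}_K$ invertible, whence $\phi\equiv0$ and the solution is unique. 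I expect the main obstacle to lie exactly here: controlling the nonlocal low-frequency coupling, i.e. proving the operator-norm convergence $\mathcal{K}_K\to\mathcal{K}_\infty$ and the invertibility of $I-\mathcal{K}_\infty$ \emph{uniformly} over the admissible solutions, rather than the existence or the a priori bounds, which follow from routine Gronwall and degree-theoretic arguments.
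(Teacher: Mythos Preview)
Your argument is sound and the overall strategy matches the paper's, though the packaging differs in two places worth noting.

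For existence, the paper simply observes that $a\mapsto \operatorname{Id}+\frac12\int_0^x f(P_K(av))a\,dy$ is compact and continuous on $L^2$ and invokes Leray--Schauder. Your route---freezing the finite vector $\xi=(\langle av,e_n\rangle)_{n\le K}$, solving the resulting \emph{linear} ODE for $a_\xi$, and applying Brouwer to $\Phi(\xi)=(\langle a_\xi v,e_n\rangle)$---is genuinely more elementary, because it exploits that $P_K$ has finite rank to turn the problem into a fixed point on $(\R^m)^K$. Both buy the same conclusion; yours avoids any infinite-dimensional degree theory.

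For uniqueness, your perturbation argument is really the same mechanism as the paper's, just phrased abstractly. The paper writes $P_K=\operatorname{Id}-Q_K$, absorbs the local part into a Gronwall inequality to obtain $\|\bar a\|_{H^1}\le C\|Q_K(\bar a v)\|_{L^2}$, and then uses a fractional smoothing bound on $Q_K$ to close. Your $\mathcal K_\infty$ (drop $P_K$, keep $\mathcal J$ and $B$ as they are for the given $a^1,a^2$) is Volterra with uniformly bounded coefficients by the a~priori bound \eqref{7}, so $(I-\mathcal K_\infty)^{-1}$ is controlled by Gronwall uniformly over all admissible $a^1,a^2$; there is no need to ``pass to the limit in $\mathcal J$'', and trying to do so would be circular. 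One small gap: your smoothing bound $\|Q_Kg\|_{L^2}\le\lambda_{K+1}^{-1/2}\|g\|_{H^1}$ requires $g=\phi v\in H^1_0$, but the lemma is stated for $v\in H^1$, and $\phi(L)v(L)$ need not vanish. The paper sidesteps this by using $H^{1/4}$ (where $H^{1/4}=H^{1/4}_0$), obtaining $\|Q_K(\bar a v)\|_{L^2}\le CK^{-1/4}\|\bar a v\|_{H^{1/4}}\le CK^{-1/4}\|v\|_{H^1}\|\bar a\|_{H^1}$; you should make the same adjustment, which only changes the rate and not the conclusion.
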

\begin{proof} Let $A(x):=\frac12 f(P_K(av))$. Then, $a(x)$ is a solution matrix of the ODE
\begin{equation}\label{8}
y'(x)=A(x)y(x),\ \ y\in\R^m,
\end{equation}
i.e., every solution $y(x)$ of this equation has the form $y(x)=a(x)y(0)$. By this reason, $a(x)$ is invertible and, since the matrix  $f$ is globally bounded, by the standard estimates for the ODE \eqref{8}, we see that $a(x)$ is also uniformly bounded with respect to $x\in[0,L]$. Moreover, since the inverse matrix $b:=[a^{-1}(x)]^t$ solves the equation
\begin{equation}\label{18}
\frac d{dx}b=-\frac12 f(P_K(av))^tb,\ \ b\big|_{x=0}=\operatorname{Id},
\end{equation}
 the analogous estimate holds also for $a^{-1}(x)$. Finally, from \eqref{6} and \eqref{18}, we establish the estimate for the derivative of $a$. Thus, estimate \eqref{7} is proved and we only need to check the solvability.
\par
The existence of a solution is an immediate corollary of the fact that the operator
$$
a\mapsto \operatorname{Id}+\frac12\int_0^xf(P_K(av))a(y)\,dy
$$
is compact and continuous, say, as an operator in $L^2(0,L)$ and we have uniform a priori bounds for the solution (e.g., the Leray-Schauder degree theory can be used to verify the existence of a solution).
\par
Let us prove the uniqueness. Let $a_1$ and $a_2$ be two solutions of equation \eqref{6} and $\bar a:=a_1-a_2$. Then, this function solves
\begin{equation}\label{9}
\frac d{dx} \bar  a=\frac12f(P_K(a_1v))\bar a+\frac12 [f(P_K(a_1v))-f(P_K(a_2v))]a_2.
\end{equation}
Integrating this equation and using that $f'$ is bounded, together with estimate \eqref{7}, we get
$$
\|\bar a(x)\|\le C\int_0^x \|\bar a(y)\|+\|P_K(\bar av)(y)\|\,dy.
$$
Denoting $Q_K=\operatorname{Id}-P_K$, estimating
$$
\|\bar a(y)v(y)\|\le \|\bar a(y)\|\|v(y)\|\le C\|v\|_{H^1}\|\bar a(y)\|,
$$
and using the Gronwall inequality, we have
\begin{equation}\label{10}
\|\bar a\|_{L^\infty}^2\le C\|Q_K(\bar av)\|_{L^2}^2,
\end{equation}
where $C$ depends on $\|v\|_{H^1}$, but is independent of $K$. Moreover, inserting this estimate to \eqref{9}, we have
\begin{equation}\label{11}
\|\bar a\|_{H^1}^2\le C\|Q_K(\bar av)\|^2_{L^2}.
\end{equation}
Finally, using that
$$
\|Q_Kz\|_{L^2}^2\le \lambda_K^{-1/4}\|Q_Kz\|^2_{H^{1/4}}\le CK^{-1/2}\|z\|_{H^{1/4}},
$$
we get
\begin{equation}\label{12}
\|Q_K(\bar av)\|_{L^2}^2\le CK^{-1/2}\|\bar av\|^2_{H^{1/4}}\le CK^{-1/2}\|v\|^2_{H^1}\|\bar a\|^2_{H^1}
\end{equation}
and \eqref{11} guarantees that $\bar a=0$ if $K=K(\|v\|_{H^1})$ is large enough. Lemma \ref{Lem1} is proved.
\end{proof}
Thus, we have proved that, for every $R>0$, equation \eqref{6} defines  a map
$$
v\mapsto a(v),\ \  a:\,B(R,0,H^1)\to W^{1,\infty}(0,L;GL(m,\R))
$$
if $K\ge K_0(R)$ is large enough (here and below, we denote by $B(R,x,V)$ the ball of radius $R$ in the space $V$ centered at $x$). The next lemma gives the Lipschitz continuity of this map.

\begin{lemma}\label{Lem2} Under the assumptions of Lemma \ref{Lem1}, the map $a$ satisfies
\begin{equation}\label{13}
\|a(v_1)-a(v_2)\|_{W^{1,\infty}}+\|a^{-1}(v_1)-a^{-1}(v_2)\|_{W^{1,\infty}}\le C\|v_1-v_2\|_{H^1},
\end{equation}
for all two functions $v_1,v_2\in B(R,0,H^1)$. Moreover, the constant $C$ depends on $R$, but is independent of $K\ge K_0(R)$.
\end{lemma}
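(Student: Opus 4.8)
The plan is to mimic the uniqueness argument of Lemma \ref{Lem1}, now keeping track of an additional source term generated by the difference $v_1-v_2$. First I would set $a_i:=a(v_i)$, $b_i:=a_i^{-1}$ and $\bar a:=a_1-a_2$, and subtract the two copies of \eqref{6} to obtain the linear ODE
$$
\Nx\bar a=\tfrac12 f(P_K(a_1v_1))\bar a+\tfrac12\big[f(P_K(a_1v_1))-f(P_K(a_2v_2))\big]a_2,\qquad \bar a\big|_{x=0}=0.
$$
Using that $f'$ is bounded together with the decomposition $a_1v_1-a_2v_2=a_1(v_1-v_2)+\bar a v_2$, the last bracket is controlled pointwise by $C\|P_K(a_1(v_1-v_2))\|+C\|P_K(\bar a v_2)\|$, so that after integration in $x$ the source splits into a genuinely small part coming from $v_1-v_2$ and a part proportional to $\bar a$.

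Next I would run the same three-step scheme that produced \eqref{10}--\eqref{12}. Writing $Q_K=\operatorname{Id}-P_K$ and estimating $\|\bar a(y)v_2(y)\|\le C\|v_2\|_{H^1}\|\bar a(y)\|$, the local factor of $P_K(\bar a v_2)$ is absorbed by Gronwall, while $\|P_K(a_1(v_1-v_2))\|_{L^2}\le\|a_1(v_1-v_2)\|_{L^2}\le C\|v_1-v_2\|_{H^1}$ becomes a constant source. This yields the $L^\infty$-analogue of \eqref{10},
$$
\|\bar a\|_{L^\infty}^2\le C\|v_1-v_2\|_{H^1}^2+C\|Q_K(\bar a v_2)\|_{L^2}^2,
$$
and, feeding it back into the ODE exactly as in the passage from \eqref{10} to \eqref{11}, the $H^1$-analogue
$$
\|\bar a\|_{H^1}^2\le C\|v_1-v_2\|_{H^1}^2+C\|Q_K(\bar a v_2)\|_{L^2}^2.
$$
Now the smoothing estimate \eqref{12} gives $\|Q_K(\bar a v_2)\|_{L^2}^2\le CK^{-1/2}\|v_2\|_{H^1}^2\|\bar a\|_{H^1}^2$, so for $K\ge K_0(R)$ large enough the last term is absorbed into the left-hand side, leaving $\|\bar a\|_{H^1}\le C\|v_1-v_2\|_{H^1}$ with $C=C(R)$.

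To upgrade from $H^1$ to $W^{1,\infty}$ I would return to the ODE and bound $\|\Nx\bar a\|_{L^\infty}$ directly: the first term is $\le C\|\bar a\|_{L^\infty}$, while for the projected terms I would use that in one dimension $\|P_Kw\|_{L^\infty}\le C\|P_Kw\|_{H^s}\le C\|w\|_{H^1}$ for some $s\in(1/2,1)$, applied to $w=a_1(v_1-v_2)$ and $w=\bar a v_2$; since $\|\bar a v_2\|_{H^1}\le C\|\bar a\|_{H^1}\le C\|v_1-v_2\|_{H^1}$ (as $W^{1,\infty}\cdot H^1\subset H^1$ in 1D), this gives $\|\bar a\|_{W^{1,\infty}}\le C\|v_1-v_2\|_{H^1}$. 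Finally, for the inverses I would use the pointwise identity $a_1^{-1}-a_2^{-1}=-a_1^{-1}\bar a\,a_2^{-1}$ together with the fact that $W^{1,\infty}(0,L)$ is a Banach algebra and the uniform bound \eqref{7}, to conclude $\|a_1^{-1}-a_2^{-1}\|_{W^{1,\infty}}\le C\|\bar a\|_{W^{1,\infty}}\le C\|v_1-v_2\|_{H^1}$, which completes \eqref{13}.

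The main obstacle, exactly as in Lemma \ref{Lem1}, is the nonlocality of $P_K$: one cannot bound $P_K(\bar a v_2)$ pointwise by $\bar a$ alone, so a priori the estimate for $\bar a$ depends on $\bar a$ itself. The resolution is the splitting $P_K=\operatorname{Id}-Q_K$, which isolates the harmless local factor (absorbed by Gronwall) from the high-frequency remainder $Q_K(\bar a v_2)$, the latter being small of order $K^{-1/4}$ by \eqref{12}; this smallness is what both closes the estimate and keeps the constant $C$ independent of $K$.
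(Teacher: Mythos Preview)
Your proof is correct and follows essentially the same route as the paper: subtract the two copies of \eqref{6}, run the Gronwall/$Q_K$-absorption scheme of Lemma~\ref{Lem1} with the extra source term coming from $v_1-v_2$ (you decompose $a_1v_1-a_2v_2=a_1(v_1-v_2)+\bar a v_2$, the paper uses the symmetric splitting $\bar a v_1+a_2(v_1-v_2)$, which is immaterial), and then upgrade $H^1\to W^{1,\infty}$ by reading $\Nx\bar a$ off the ODE using $\|P_Kw\|_{L^\infty}\le C\|w\|_{H^1}$ with $C$ independent of $K$. The only difference is in the last step: for the inverses the paper repeats the ODE argument via \eqref{18}, whereas you use the pointwise identity $a_1^{-1}-a_2^{-1}=-a_1^{-1}\bar a\,a_2^{-1}$ together with $W^{1,\infty}$ being a Banach algebra and the uniform bound \eqref{7}; both work equally well.
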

\begin{proof} Let $a_1=a_1(v_1)$ and $a_2=a_2(v_2)$ be two solutions of \eqref{6} and $\bar a=a_1-a_2$. Then, this matrix solves
\begin{equation}\label{16}
\frac d{dx}\bar a=\frac12f(P_K(a_1v_1))\bar a+\frac12 [f(P_K(a_1v_1))-f(P_K(a_2v_2))]a_2.
\end{equation}
Arguing as in the proof of Lemma \ref{Lem1}, we derive the following analogue of inequality \eqref{11}:
\begin{equation}\label{14}
\|\bar a\|_{H^1}\le C\|Q_K(\bar av_1)\|_{L^2}+C\|a_2(v_1-v_2)\|_{L^2}
\end{equation}
which, together with \eqref{12} gives
\begin{equation}\label{15}
\|\bar a\|_{H^1}\le C\|v_1-v_2\|_{L^2}
\end{equation}
and the constant $C$ is independent of $K\ge K_0$. Using now the equation \eqref{16}
together with the fact that $\|P_Kw\|_{L^\infty}\le C\|w\|_{H^1}$ where the constant $C$ is independent of $K$, we prove that
\begin{equation}\label{17}
\|a(v_1)-a(v_2)\|_{W^{1,\infty}}\le C\|v_1-v_2\|_{H^1}.
\end{equation}
The estimate for the inverse matrix $a^{-1}$ can be obtained analogously using the fact that the matrix $b(x):=[a^{-1}(x)]^t$ solves  equation \eqref{18}.
Thus, Lemma \ref{Lem2} is proved.
\end{proof}
Let us consider now the map $v\mapsto u$ given by
\begin{equation}\label{19}
u=U(v):=a(v)v.
\end{equation}
According to Lemmas \ref{Lem1} and \ref{Lem2}, this map is Lipschitz continuous as the map from $B(R,0,H^1)$ to $H^1$  if $K\ge K_0(R)$ and
\begin{equation}\label{20}
\|U(v_1)-U(v_2)\|_{H^1}\le C\|v_1-v_2\|_{H^1},
\end{equation}
where the constant $C$ depends on $R$, but is independent of $K\ge K_0$.
\par
We now describe the inverse map $v=V(u)$ which defined via $V(u)=a^{-1}(u)u$ and  $a=a(u)$ solves the linear ODE
\begin{equation}\label{21}
\frac d{dx}a=\frac12f(P_Ku)a,\ \ a\big|_{x=0}=\operatorname{Id}.
\end{equation}
Arguing analogously to Lemmas \ref{Lem1} and \ref{Lem2} (but a bit simpler since equation \eqref{21} is {\it linear}), we see that the analogues of estimates \eqref{7} and \eqref{13} hold for $a(u)$ as well (also for all $K$ independently of $R$). In addition, clearly, since the function $f$ is smooth, $a(u)$ is a $C^\infty$-map in $H^1(0,L)$. Therefore, we have proved that the inverse map $V=V(u)$ belongs to $C^\infty(H^1,H^1)$ and
\begin{equation}\label{22}
\|V(u_1)-V(u_2)\|_{H^1}\le C\|u_1-u_2\|_{H^1},\ \ u_i\in B(R,0,H^1),
\end{equation}
where the constant $C$ depends only on $R$, but is independent of $K$. Thus, we have proved the following result.

\begin{lemma}\label{Lem3} The above defined map $V: H^1(0,L)\to H^1(0,L)$ is $C^\infty$-diffeomorphism between $B(R,0,H^1)$ and $V(B(R,0,H^1))\subset H^1$
if $K\ge K_0(R)$. Moreover, the norms of $V$ and $U=V^{-1}$ as well as their derivatives are independent of $K\ge K_0(R)$.
\end{lemma}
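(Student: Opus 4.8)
The plan is to assemble Lemma \ref{Lem3} from three facts that the preceding discussion already provides — that $V\in C^\infty(H^1,H^1)$, that $U$ and $V$ are mutually inverse, and that both obey the $K$-uniform Lipschitz bounds \eqref{20} and \eqref{22} — and then to supply the one genuinely missing ingredient, namely the $C^\infty$-smoothness of the inverse map $U=V^{-1}$. Once $U$ is known to be smooth, the diffeomorphism claim is immediate: $U$ and $V$ are $C^\infty$ bijections between $B(R,0,H^1)$ and $V(B(R,0,H^1))$ that are inverse to one another.

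First I would verify that $U$ and $V$ are honestly mutually inverse for $K\ge K_0(R)$. Writing $u=U(v)=a(v)v$ with $a(v)$ solving \eqref{6}, one has $P_Ku=P_K(a(v)v)$, so equation \eqref{6} coincides with the linear ODE \eqref{21} solved by the same matrix; the uniqueness from Lemma \ref{Lem1} then forces $a(u)=a(v)$ and hence $V(U(v))=a^{-1}(u)u=v$. The reverse composition $U(V(u))=u$ is checked identically. This identifies $V$ as a bijection onto its image with inverse $U$.

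The core step is the smoothness of $U$, equivalently of the solution map $v\mapsto a(v)$ of the \emph{nonlinear} equation \eqref{6}, and I expect this to be the main obstacle. I would apply the implicit function theorem to the $C^\infty$ functional
\begin{equation*}
\Phi(a,v)(x):=a(x)-\operatorname{Id}-\frac12\int_0^x f(P_K(av))(y)\,a(y)\,dy
\end{equation*}
on $W^{1,\infty}(0,L;\operatorname{Mat}(m))\times H^1(0,L)$, whose zero set is exactly the graph of $v\mapsto a(v)$. The only delicate point is the invertibility of the partial derivative $D_a\Phi(a,v)=\operatorname{Id}-M$. Solving $(\operatorname{Id}-M)\beta=\gamma$ by the substitution $\beta=a\,w$ (variation of constants) absorbs the local term $\tfrac12 f(P_K(av))\beta$ into the flow $a$, leaving a fixed-point problem driven solely by the nonlocal term $P_K(\beta v)$; this term carries precisely the $K^{-1/2}$ smoothing gain exploited in \eqref{10}--\eqref{12}, so the reduced map is a contraction once $K\ge K_0(R)$. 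Thus $D_a\Phi$ is invertible, the implicit function theorem gives $v\mapsto a(v)\in C^\infty$, and therefore $U(v)=a(v)v\in C^\infty$. This invertibility of $D_a\Phi$ is really the quantitative form of the uniqueness argument of Lemma \ref{Lem1}, which is why the threshold $K\ge K_0(R)$ reappears here.

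Finally, for the $K$-independence of the norms of $U$, $V$ and their derivatives I would differentiate the defining ODEs \eqref{6} and \eqref{21}: the equations satisfied by the successive derivatives have the same structure, with coefficients controlled by $f$ and its derivatives and source terms built from $P_K$. Since the estimates $\|P_Kw\|_{L^\infty}\le C\|w\|_{H^1}$ and $\|Q_Kz\|_{L^2}\le CK^{-1/2}\|z\|_{H^{1/4}}$ used throughout have constants independent of $K$, the Gronwall-type bounds of Lemmas \ref{Lem1} and \ref{Lem2} propagate verbatim to the linearized and higher-order equations with $K$-uniform constants, yielding the stated uniformity for all $K\ge K_0(R)$.
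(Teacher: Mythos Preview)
Your argument is sound and considerably more explicit than the paper's own proof, which is a single sentence: the smoothness of $U=V^{-1}$ ``can be easily derived via, say, inverse function theorem.'' The difference is one of framing. The paper applies the inverse function theorem to $V$, which is already known to be $C^\infty$; what makes this cheap is that $a(u)$ depends on $u$ only through the finite-dimensional datum $P_Ku$, so $V'(u)$ is a finite-rank perturbation of the isomorphism $h\mapsto a^{-1}(u)h$ of $H^1$, and injectivity of $V'(u)$ comes for free from the already established Lipschitz bound \eqref{20} on the set-theoretic inverse $U$. You instead apply the implicit function theorem directly to the nonlinear equation \eqref{6} for $a(v)$, which forces you to check invertibility of $D_a\Phi$ --- precisely the linearized version of the uniqueness argument in Lemma~\ref{Lem1}, as you correctly observe. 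One small correction: after your substitution $\beta=aw$, the remaining nonlocal term $P_K(\beta v)$ does not by itself carry a $K^{-1/2}$ gain; the gain in \eqref{10}--\eqref{12} appears only after writing $P_K=\operatorname{Id}-Q_K$, absorbing the pointwise part $\beta v$ by a further Gronwall step, and invoking the smallness of $Q_K$. With that adjustment the linearized operator is injective, and since it is a compact perturbation of the identity on $W^{1,\infty}$, it is invertible and the implicit function theorem applies. Both routes reach the same conclusion; the paper's is shorter because it exploits the finite-rank structure of $V'(u)$ instead of re-running the Lemma~\ref{Lem1} estimate, while yours has the virtue of being self-contained and not relying on the prior smoothness of $V$.
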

Indeed, all assertions except of differentiability of the inverse $U=V^{-1}$ are checked above and the differentiability of $U$ can be easily derived via, say, inverse function theorem.

\begin{remark}\label{Rem1.BC} Note that, according to \eqref{19}, the maps $U$ and $V$ act not only from $H^1(0,L)$ to $H^1(0,L)$, but also from $H^1_0(0,L)$ to $H^1_0(0,L)$. In other words, the above constructed diffeomorphism  preserves the Dirichlet boundary conditions. Moreover, as not difficult to show, this diffeomorphism also preserves the regularity. In particular, it maps $H^2(0,L)$ to $H^2(0,L)$.
\end{remark}

\section{Transforming the Equation}\label{s2.5}
The aim of this section is to rewrite equation \eqref{1} in terms of the new dependent variable $v=V(u)$. To do this, we first remind the standard properties of solutions of this problem.

\begin{proposition}\label{Prop2.dis} Let $f$ and $g$ be smooth functions with finite support. Then, for every $u_0\in H^1_0(0,L)$, problem \eqref{2} possesses a unique solution
\begin{equation}\label{2.def}
 u\in C([0,T],H^1_0(0,L))\cap L^2(0,T;H^2(0,L)),\ \ T>0,
\end{equation}
satisfying $u\big|_{t=0}=u_0$. Moreover, the following dissipative estimate holds:
\begin{equation}\label{2.dis}
\|u(t)\|_{H^1}\le C\|u_0\|_{H^1}e^{-\alpha t}+C_*,
\end{equation}
where the positive constants $\alpha$, $C$ and $C_*$ are independent of $t$ and $u_0$.
\end{proposition}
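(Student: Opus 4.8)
The plan is to treat \eqref{1} as a semilinear parabolic system whose nonlinearities are globally bounded and globally Lipschitz, and to exploit the compact support of $f$ and $g$ in the energy estimates. First I would record the consequences of the hypotheses: since $f$ and $g$ are smooth with compact support, there are constants $M,R_0$ with $|f|,|g|\le M$, with $f$ and $g$ vanishing for $|w|>R_0$, and with both globally Lipschitz. In one space dimension $H^1_0(0,L)\hookrightarrow C[0,L]$, so $u\mapsto f(u)\partial_x u+g(u)$ is locally Lipschitz from $H^1_0$ into $L^2$. Local existence of a solution on a maximal interval then follows from the analytic semigroup generated by $\partial_x^2$ with Dirichlet conditions together with a contraction mapping argument (equivalently, from a Galerkin scheme); parabolic smoothing gives the regularity \eqref{2.def}, and the inclusion $u\in C([0,T],H^1_0)$ follows from $u\in L^2(0,T;H^2)$ and $\partial_t u\in L^2(0,T;L^2)$ by the Lions--Magenes interpolation theorem.

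The global bounds and the dissipativity come from two energy estimates. Testing \eqref{1} with $u$, the diffusion produces $\|\partial_x u\|_{L^2}^2$; the key point is that wherever the integrands of the convective and reaction terms are nonzero one has $|u|\le R_0$, so $\int (f(u)\partial_x u)\cdot u$ is bounded by $MR_0L^{1/2}\|\partial_x u\|_{L^2}$ (which is absorbed by the diffusion) and $\int g(u)\cdot u$ is bounded by the constant $MR_0L$. This yields
\[
\frac{d}{dt}\|u\|_{L^2}^2+\|\partial_x u\|_{L^2}^2\le C,
\]
whence, by Poincar\'e's inequality $\|\partial_x u\|_{L^2}^2\ge\lambda_1\|u\|_{L^2}^2$ and Gronwall, an absorbing ball in $L^2$ together with the integral bound $\int_t^{t+1}\|\partial_x u\|_{L^2}^2\,ds\le C(1+\|u(t)\|_{L^2}^2)$.

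The $H^1$ estimate is where I expect the main obstacle. Testing \eqref{1} with $-\partial_x^2 u$ (legitimate since $\partial_t u$ vanishes on the boundary) gives
\[
\frac{d}{dt}\|\partial_x u\|_{L^2}^2+\|\partial_x^2 u\|_{L^2}^2\le 2M^2\|\partial_x u\|_{L^2}^2+C,
\]
in which the convective term is now controlled only by $M\|\partial_x u\|_{L^2}\|\partial_x^2 u\|_{L^2}$, since the compact support of $f$ no longer bounds $\partial_x^2 u$ pointwise. This inequality does not close by itself: the first-order convective term competes with the diffusion at the $H^1$ level. The hard part is precisely to overcome this, and I would do so with the uniform Gronwall lemma, using the integral bound on $\int_t^{t+1}\|\partial_x u\|_{L^2}^2\,ds$ from the previous step to neutralise the factor $2M^2$; this produces a uniform $H^1$ bound for $t\ge t_0+1$, i.e. an absorbing ball in $H^1_0$, and in particular rules out finite-time blow-up, so the local solution is global. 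Combining the $L^2$ exponential decay with the $H^1$ absorbing property and the continuity of the solution map then yields \eqref{2.dis} in its stated form.

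Finally, for uniqueness on $[0,T]$ I would test the difference $w=u_1-u_2$ of two solutions with $w$, write $f(u_1)\partial_x u_1-f(u_2)\partial_x u_2=f(u_1)\partial_x w+(f(u_1)-f(u_2))\partial_x u_2$, and estimate the troublesome middle term by $L_f\|w\|_{L^4}^2\|\partial_x u_2\|_{L^2}$; the $1$D Gagliardo--Nirenberg inequality $\|w\|_{L^4}^2\le C\|w\|_{L^2}^{3/2}\|\partial_x w\|_{L^2}^{1/2}$ lets me absorb a fraction of $\|\partial_x w\|_{L^2}^2$ into the diffusion and close by Gronwall, since $\partial_x u_2\in L^\infty(0,T;L^2)$. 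Throughout, the one genuine difficulty is the convective term $f(u)\partial_x u$: the compact support of $f$ renders it harmless in the $L^2$ estimate, but at the $H^1$ level it must be balanced against the diffusion through the two-step uniform Gronwall argument rather than absorbed outright.
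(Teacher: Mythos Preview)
Your proposal is correct and follows essentially the same route as the paper: test with $u$ for the $L^2$ estimate (finite support bounds the nonlinear terms by constants), test with $-\partial_x^2 u$ for the $H^1$ estimate, and close the latter using the integral bound $\int_t^{t+1}\|\partial_x u\|_{L^2}^2\,ds$ from the first step---the paper simply writes ``Poincar\'e and Gronwall together with'' the $L^2$ dissipative estimate, which is exactly your uniform Gronwall argument. The paper omits the local existence and uniqueness sketches you supply, declaring them straightforward and left to the reader.
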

\begin{proof} We give below only the derivation of the dissipative estimate \eqref{2.dis}. The rest statements are straightforward and are left to the reader, see also \cites{bv1,tem,MirZe} for more details.
\par
We first obtain the $L^2$-analogue of estimate \eqref{2.dis}. To this end, we multiply equation \eqref{1} by $u$ and integrate over $x\in(0,L)$. Then, after standard transformations, we end up with
\begin{equation}
\frac12\frac d{dt}\|u(t)\|^2_{L^2}+\|\Nx u(t)\|^2_{L^2}+(f(u)\Nx u,u)+(g(u),u)=0.
\end{equation}
Using now that $f$ and $g$ have finite support, we have
\begin{equation}
|(g(u),u)|\le C,\ \ |(f(u)\Nx u,u)|\le \frac12\|\Nx u\|^2_{L^2}+\frac12\|[f(u)]^tu\|^2_{L^2}\le C+\frac12\|\Nx u\|^2_{L^2}.
\end{equation}
Thus,
\begin{equation}
\frac d{dt}\|u(t)\|^2_{L^2}+\|\Nx u(t)\|^2_{L^2}\le C_*.
\end{equation}
Using now the Poincare inequality together with the Gronwall inequality, we derive that
\begin{equation}\label{2.l2-dis}
\|u(t)\|^2_{L^2}+\int_{t}^{t+1}\|\Nx u(s)\|^2_{L^2}\,ds\le C\|u_0\|^2_{L^2}e^{-\alpha t}+C_*
\end{equation}
and the $L^2$-analogue of the desired dissipative estimate is obtained.
\par
At the next step, we multiply equation \eqref{1} by $-\Nx^2 u$ and integrate over $x\in(0,L)$ to obtain
\begin{equation}
\frac12\frac d{dt}\|\Nx u(t)\|^2_{L^2}+\|\Nx^2 u(t)\|^2_{L^2}=(f(u)\Nx u,\Nx^2 u)+(g(u),\Nx^2 u).
\end{equation}
Using again that $f$ and $g$ are globally bounded, we arrive at
\begin{equation}
(f(u)\Nx u,\Nx^2 u)+(g(u),\Nx^2 u)\le \frac12\|\Nx^2 u(t)\|^2_{L^2}+ C(\|\Nx u(t)\|^2_{L^2}+1)
\end{equation}
and, consequently,
\begin{equation}
\frac d{dt}\|\Nx u(t)\|^2_{L^2}+\|\Nx^2 u(t)\|^2_{L^2}\le C(1+\|\Nx u(t)\|^2_{L^2}).
\end{equation}
Using again the Poincare inequality and the Gronwall inequality together with estimate \eqref{2.l2-dis}, we finally have
\begin{equation}\label{2.h1-dis}
\|\Nx u(t)\|^2_{L^2}+\int_t^{t+1}\|\Nx^2 u(s)\|^2_{L^2}\,ds\le C\|u_0\|^2_{H^1}e^{-\alpha t}+C_*.
\end{equation}
Thus, the desired dissipative estimate is verified and the proposition is proved.
\end{proof}
According to the proved proposition, equation \eqref{1} generates a solution semigroup $S(t)$ in the phase space $\Phi:=H_0^1(0,L)$ via
\begin{equation}\label{2.sem}
S(t):\Phi\to\Phi,\ \ S(t)u_0:=u(t).
\end{equation}
Moreover, according to \eqref{2.dis}, this semigroup is dissipative and possesses an absorbing ball
\begin{equation}\label{2.abs}
\mathbb B:=\{u_0\in\Phi,\ \ \|u_0\|_{\Phi}\le R/2\}
\end{equation}
if $R$ is large enough. Our next task is to establish the existence of a global attractor for this semigroup.
\begin{definition}\label{Def2.attr} Recall that a set $\mathcal A$ is a global attractor of the semigroup $S(t)$ in $\Phi$ if the following conditions are satisfied:
\par
1. The set $\mathcal A$ is compact in $\Phi$.
\par
2. The set $\mathcal A$ is strictly invariant, i.e., $S(t)\mathcal A=\mathcal A$ for all $t\ge0$.
\par
3. It attracts the images of all bounded sets as time tends to infinity, i.e., for every bounded set $B\subset\Phi$ and every neighbourhood $\mathcal O(\mathcal A)$ of the attractor $\mathcal A$, there exists time $T=T(\mathcal O,B)$ such that
$$
S(t)B\subset\mathcal O(\mathcal A)
$$
for all $t\ge T$.
\end{definition}

\begin{proposition}\label{Prop2.attr} Let the above assumptions hold. Then, the solutions semigroup $S(t)$ generated by equation \eqref{1} possesses a global attractor $\mathcal A$ in the phase space $\Phi$. Moreover, this attractor is bounded in $H^2(0,L)$ and is generated by all complete bounded solutions of equation \eqref{1}:
\begin{equation}\label{2.rep}
\mathcal A=\mathcal K\big|_{t=0},
\end{equation}
where $\mathcal K\subset C_b(\R,\Phi)$ consists of all solutions $u(t)$ of problem \eqref{1} which are defined for all $t\in\R$ and bounded.
\end{proposition}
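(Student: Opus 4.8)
The plan is to apply the standard abstract theorem on the existence of global attractors (see \cites{bv1,tem}), which reduces the problem to verifying three ingredients: the semigroup $S(t)$ is dissipative (possesses a bounded absorbing set), it is asymptotically compact (or, more conveniently here, admits a compact absorbing set), and the map $(t,u_0)\mapsto S(t)u_0$ enjoys enough continuity to make $S(t)$ a genuine continuous semigroup on $\Phi$. The existence and uniqueness of solutions together with the dissipative estimate \eqref{2.dis} are already granted by Proposition \ref{Prop2.dis}, so dissipativity is immediate: the ball $\mathbb B$ from \eqref{2.abs} is an absorbing set in $\Phi=H^1_0(0,L)$. Once the abstract theorem applies, the representation \eqref{2.rep} of the attractor as the set of $t=0$ slices of all complete bounded trajectories is a general and automatic consequence of strict invariance $S(t)\mathcal A=\mathcal A$.

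The heart of the argument, and the step I expect to be the main obstacle, is upgrading the $H^1$-dissipativity to an \emph{$H^2$-bound} that furnishes a compact absorbing set and simultaneously yields the claimed boundedness of $\mathcal A$ in $H^2(0,L)$. Since the embedding $H^2(0,L)\hookrightarrow H^1(0,L)$ is compact, a bounded absorbing set in $H^2$ is a \emph{compact} absorbing set in $\Phi$, which gives asymptotic compactness for free. To obtain this bound I would differentiate \eqref{1} in $x$ (or equivalently test against $\Nx^4 u$) and run an energy estimate at the $H^2$ level, controlling the nonlinear terms $f(u)\Nx u$ and $g(u)$ by the already-established $H^1$-dissipative estimate \eqref{2.h1-dis} and the fact that $f,g$ are smooth with finite support, hence globally bounded together with their derivatives. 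The parabolic smoothing inherent in the equation guarantees instantaneous regularization: any $H^1_0$-datum produces a solution lying in $H^2$ for $t>0$, with a bound depending only on $\|u_0\|_{H^1}$ and $t$ that becomes uniform once the trajectory enters $\mathbb B$. The technical care here lies in handling the convective term, but in one space dimension the Sobolev embedding $H^1(0,L)\hookrightarrow C[0,L]$ makes all products easy to estimate, so no genuine difficulty beyond bookkeeping arises.

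Concretely, after deriving a differential inequality of the schematic form
\begin{equation}
\frac{d}{dt}\|\Nx^2 u(t)\|_{L^2}^2+\|\Nx^3 u(t)\|_{L^2}^2\le C\big(1+\|\Nx u(t)\|_{L^2}^2+\|\Nx^2 u(t)\|_{L^2}^2\big),
\end{equation}
I would invoke the uniform Gronwall lemma together with the integrable bound on $\int_t^{t+1}\|\Nx^2 u(s)\|_{L^2}^2\,ds$ coming from \eqref{2.h1-dis} to conclude a uniform-in-time estimate $\|u(t)\|_{H^2}\le C_*$ for all $t\ge T(\|u_0\|_{H^1})$. This produces a bounded absorbing set $\mathbb B_2$ in $H^2$; its closure in $\Phi$ is the required compact absorbing set. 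The abstract attractor theorem then delivers a global attractor $\mathcal A\subset\mathbb B_2$, so $\mathcal A$ is bounded in $H^2$, and the characterization \eqref{2.rep} follows from strict invariance. Finally, the continuity of $S(t)$ on $\Phi$, needed to apply the abstract theorem, follows from standard energy estimates on the difference of two solutions exactly as in Proposition \ref{Prop2.dis}, using once more the boundedness of $f,g$ and their derivatives.
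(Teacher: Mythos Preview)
Your proposal is correct and follows essentially the same route as the paper: apply the abstract attractor theorem by verifying continuity of $S(t)$ and the existence of a compact absorbing set, the latter obtained via parabolic smoothing to $H^2$ starting from the $H^1$-absorbing ball $\mathbb B$, with \eqref{2.rep} and the $H^2$-boundedness of $\mathcal A$ following as standard consequences. The only difference is one of detail: the paper simply takes $S(1)\mathbb B$ as the compact absorbing set and invokes ``the parabolic smoothing property'' in one line, whereas you spell out the mechanism via an $H^2$-level differential inequality combined with the uniform Gronwall lemma and the integral control of $\int_t^{t+1}\|\Nx^2 u\|_{L^2}^2$ from \eqref{2.h1-dis}; this is exactly the computation underlying the paper's one-line claim (though testing directly against $\Nx^4 u$ would generate boundary terms, so in practice one argues via $\Dt u$ or the equation itself, a purely cosmetic adjustment).
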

\begin{proof} Indeed, according to the abstract theorem on the attractors existence (see e.g., \cite{bv1}), we need to check that the maps $S(t):\Phi\to\Phi$ are continuous for every fixed time $t$ and that the semigroup possesses a compact absorbing set. The continuity is obvious in our case. Moreover, the ball \eqref{2.abs} is the absorbing set for the solution semigroup if $R$ is large enough. However, this ball is not compact in $\Phi$. In order to overcome this difficulty, it is enough to note that the set $S(1)\mathbb B$ is also absorbing and that, due to the parabolic smoothing property, $S(1)\mathbb B$ is bounded in $H^2(0,L)$ and, by this reason, is compact in $\Phi$. Thus, all of the conditions of the above mentioned abstract theorem are verified and the existence of a global attractor is also verified. Formula \eqref{2.rep} is a standard corollary of this theorem and the fact that $\mathcal A$ is bounded in $H^2$ follows from the fact that the attractor is always a subset of the absorbing set. Thus, the proposition is proved.
\end{proof}

\begin{remark}\label{Rem2.cut} Recall that the assumptions that $f$ and $g$ have finite support are
not physically relevant. More realistic would be to assume, for instance, that $f$ and $g$ are polynomials in $u$ (e.g., $f(u)=u$ in the case of Burgers equation). However, verification of the key dissipative estimate \eqref{2.dis} is much more difficult in this case and requires extra assumptions especially in the case of systems, see the discussion in \cite{BPZ} for the case of coupled Burgers equations. On the other hand, since any trajectory enters the absorbing ball in finite time and never leaves it, the solutions outside of this ball are not important to study the long-time behavior, so the nonlinearities may be cut off outside of the absorbing ball and this reduces the general case to the case of nonlinearities with finite support. Since this trick is standard for the theory of inertial manifolds, we do not discuss here general assumptions on $f$ and $g$ which guarantee the validity of the dissipative estimate \eqref{2.dis}, assuming instead that the cut off procedure is already done, and start from the very beginning with the nonlinearities with finite support.
\end{remark}

Our next task is to transform equation \eqref{1} to the analogous equation with respect to the new dependent variable $v=V(u)$. To this end, we fix the radius $R$ in such a way that the set $\mathbb B$ is an absorbing ball for the semigroup $S(t)$ and
also fix $K_0=K_0(R)$ as in Lemma \ref{Lem3}. Then, the map $V$ is a $C^\infty$-diffeomorphism in the neighborhood $B(R,0,\Phi)$ of the attractor $\Cal A$ and, therefore, the change of variables $v=V(u)$ is well-defined and one-to-one in this neighborhood. We now study equation \eqref{4} which due to \eqref{6} has the form
\begin{equation}\label{23}
\Dt v-\Dx v=F_1(v)\Nx v+F_2(v),
\end{equation}
where
\begin{equation}\label{24}
F_1(v)=a^{-1}(v)[f(P_K(a(v)v))-f(a(v)v)]a(v)
\end{equation}
and
\begin{equation}\label{25}
F_2(v):=[a^{-1}\Dx a -a^{-1}\Dt a -a^{-1}f(av)\Nx a ]v-a^{-1}g(av).
\end{equation}
Recall also that the map $a$ as well as $F_i$ depend on a parameter $K\ge K_0$. We start with the most complicated operator $F_2$.
\begin{lemma}\label{Lem4} For sufficiently large $K\ge K_0$ the map $F_2$ belongs to $C^\infty(B(r,0,H^1))$, where $r$ is chosen in such way that
$$
V(B(R,0,H^1))\subset B(r,0,H^1)
$$
and, in particular,
\begin{equation}\label{26}
\|F_2(v_1)-F_2(v_2)\|_{H^1}\le C_K\|v_1-v_2\|_{H^1_0},\ \ v_i\in B(r,0,H^1_0),
\end{equation}
where the constant $C_K$ depends on $K$.
\end{lemma}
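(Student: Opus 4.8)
The plan is to split $F_2$ from \eqref{25} into its four pieces,
$$
F_2(v)=a^{-1}\Dx a\,v-a^{-1}\Dt a\,v-a^{-1}f(av)\Nx a\,v-a^{-1}g(av),
$$
and to show that each defines a $C^\infty$ (in particular Lipschitz) map $B(r,0,H^1)\to H^1$ with a $K$-dependent constant. Throughout I would use that, by Lemmas \ref{Lem1}--\ref{Lem3}, the maps $v\mapsto a(v)$ and $v\mapsto a^{-1}(v)$ are smooth and Lipschitz from $B(r,0,H^1)$ into $W^{1,\infty}$, that $v\in B(r,0,H^1)\hookrightarrow L^\infty$, and that $u:=U(v)=a(v)v$ ranges in the bounded set $B(R,0,H^1)$. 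The last two pieces are easy: since $\Nx a=\frac12 f(P_K(av))a$ by \eqref{6}, the third term equals $\frac12 a^{-1}f(av)f(P_K(av))a\,v$, and both it and $a^{-1}g(av)$ are products of $W^{1,\infty}$ matrices (smooth functions of the bounded quantities $av$ and $P_K(av)$) with $v\in H^1$; they lie in $H^1$ and are Lipschitz in $v$ by Lemma \ref{Lem2}.

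The real content lies in the regularity of $\Dx a$ and $\Dt a$, which a priori could be uncontrolled. Here the finite-rank projector $P_K$ is decisive: since $P_K(av)=\sum_{n\le K}\langle av,e_n\rangle e_n$ is a finite combination of the smooth Dirichlet eigenfunctions $e_n$, the coefficient $\frac12 f(P_K(av))$ in \eqref{6} is $C^\infty$ in $x$ with all $C^j_x$-norms bounded by $C_K$ (depending only on $\|av\|_{L^2}$). Differentiating \eqref{6} in $x$ gives
$$
\Dx a=\tfrac12 f'(P_K(av))\,\Nx\big(P_K(av)\big)\,a+\tfrac14 f(P_K(av))^2 a,
$$
where $\Nx(P_K(av))=\sum_{n\le K}\langle av,e_n\rangle\,\Nx e_n$ has $W^{1,\infty}$-norm bounded by $C_K\|av\|_{L^2}$. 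Hence $a^{-1}\Dx a\in W^{1,\infty}$ with a $C_K$-bound, so $a^{-1}\Dx a\,v\in H^1$, and the Lipschitz estimate follows from Lemma \ref{Lem2} and the smooth dependence of $P_K(av)$ on $v$.

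The hard part, and the step I would treat most carefully, is the term $a^{-1}\Dt a\,v$: at first sight $\Dt a$ involves $\Dt v$, whereas $F_2$ must be a function of $v$ alone. The resolution is to differentiate \eqref{6} in $t$ and to note that $av=u$, so $\Dt(av)=\Dt u$; the original equation \eqref{1} then lets me replace this by the \emph{purely spatial} expression $\Dt u=\Dx u-f(u)\Nx u-g(u)$ with $u=U(v)$. This turns $\Dt a$ into the solution of the \emph{linear} ODE
$$
\Nx(\Dt a)=\tfrac12 f(P_K(av))\,\Dt a+\tfrac12 f'(P_K(av))\,P_K(\Dt u)\,a,\qquad \Dt a\big|_{x=0}=0,
$$
in whose source neither $\Dt a$ nor $\Dt v$ occurs. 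Since
$$
P_K(\Dt u)=-\sum_{n\le K}\lambda_n\langle u,e_n\rangle e_n-P_K(f(u)\Nx u)-P_K(g(u))
$$
is the finite-rank projection of an element of $H^{-1}$, it is $C^\infty$ in $x$ with $C^j_x$-norms $\le C_K$ and depends smoothly (Lipschitz) on $v$ through $u=U(v)$ (using that $u\mapsto f(u)\Nx u$ is Lipschitz $H^1\to L^2$). Solving this linear ODE yields $\Dt a$ smooth in $x$ with $\|\Dt a\|_{W^{1,\infty}}\le C_K$, so $a^{-1}\Dt a\,v\in H^1$; crucially, this \emph{defines} $\Dt a$ intrinsically from $v$, making $F_2$ an honest map on $B(r,0,H^1)$ rather than something evaluated along solutions.

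Finally, I would collect the four terms and observe that every building block — $a(v)$, $a^{-1}(v)$, $U(v)$, the smooth nonlinearities $f,g$, and the bounded smoothing projector $P_K$ — depends smoothly on $v\in B(r,0,H^1)$ with $K$-dependent norms. As $F_2$ is assembled from these by composition, multiplication, and solving linear ODEs with smoothly varying coefficients and sources, it follows that $F_2\in C^\infty(B(r,0,H^1))$ and that \eqref{26} holds. The one point worth stressing is that all constants here are \emph{allowed} to depend on $K$, in sharp contrast with the later analysis of $F_1$, where $K$-independence (and hence smallness) is what makes the spectral gap argument work.
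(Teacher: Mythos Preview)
Your proposal is correct and follows essentially the same approach as the paper's proof: both identify $\Dx a$ and $\Dt a$ as the only nontrivial terms, obtain $W^{1,\infty}$ control of $\Dx a$ (hence of $\Nx^3 a$) by differentiating \eqref{6} in $x$ and exploiting the finite-rank smoothing of $P_K$, and---crucially---\emph{define} $\Dt a$ as the solution of the linear ODE obtained by differentiating \eqref{6} in $t$, with source $P_K\Dt u$ expressed through \eqref{1} as a smooth function of $v$ alone. Your emphasis that this makes $F_2$ an intrinsic map on $B(r,0,H^1)$ (not merely something evaluated along solutions) is exactly the point the paper makes when it writes ``we \emph{define} operator $\Dt a(v)$ as a solution of the ODE \eqref{30}.''
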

\begin{proof} Actually, definition \eqref{25} of the operator $F_2$ involves two non-trivial terms $\Dx a(v)$ and $\Dt a(v)$, the other terms can be estimated in a straightforward way using Lemmas \ref{Lem1} and \ref{Lem2}. Let us first treat $\Dx a$. Differentiating equation \eqref{6} in $x$, we get
\begin{equation}\label{27}
\frac {d^2}{dx^2}a=\frac12 f(P_K(av))\Nx a+\frac12 f'(P_K(av))(\Nx P_K(av))a.
\end{equation}
The estimate for the $L^\infty$-norms of the terms $a(v_1)-a(v_2)$ as well as for the terms $\Nx a(v_1)-\Nx a(v_2)$ are obtained in Lemma \ref{Lem2}. Furthermore, since
\begin{equation}\label{28}
P_K(av)=\sum_{k=1}^K(av,e_k)e_k
\end{equation}
and all $e_k$ are smooth, we also have
$$
\|P_K(a(v_1)v_1-a(v_2)v_2)\|_{L^\infty}\le C_K(\|v_1-v_2\|_{L^2}+\|a(v_1)-a(v_2)\|_{L^2})
$$
and, therefore,
$$
\|\Dx a(v_1)-\Dx a(v_2)\|_{L^\infty}\le C_K\|v_1-v_2\|_{H^1}.
$$
Moreover, differentiating equation \eqref{27} once more in $x$ and arguing analogously, we derive that
\begin{equation}\label{29}
\|\Nx^3 a(v_1)-\Nx^3 a(v_2)\|_{L^\infty}\le C_K\|v_1-v_2\|_{H^1}
\end{equation}
which is sufficient for estimating the $H^1$ norm of $F_2(v_1)-F_2(v_2)$.
\par
Let us now treat the term $\Dt a(v)$. Being pedantic, this term is even not defined yet since we need to use equation \eqref{1} or \eqref{4} in order to evaluate the time derivative. To define it, we differentiate equation \eqref{6} in time and write
\begin{equation}\label{30}
\frac d{dx}\Dt a=\frac12 f(P_Ku)\Dt a+\frac12 f'(P_Ku)(P_K\Dt u)a,\ \ \Dt a\big|_{x=0}=0,
\end{equation}
where $u=U(v)$. Then, using equation \eqref{1}, we write
\begin{equation}\label{31}
P_K\Dt u=\sum_{k=1}^K[(u,\Dx e_k)-(f(u)\Nx u,e_k)-(g(u),e_k)]e_k.
\end{equation}
Thus, we {\it define} operator $\Dt a(v)$ as a solution of the ODE \eqref{30} where $u=U(v)$ and $P_K\Dt u$ is defined by \eqref{31}. Moreover, using \eqref{20}, we see that
\begin{equation}\label{32}
\|P_K\Dt u(v_1)-P_K\Dt u(v_2)\|_{L^\infty}\le C_K\|v_1-v_2\|_{H^1}.
\end{equation}
Then, analogously to the proof of Lemma \ref{Lem2}, we deduce from equation \eqref{30} that
\begin{equation}\label{33}
\|\Dt a(v_1)-\Dt a(v_2)\|_{W^{1,\infty}}\le C_K\|v_1-v_2\|_{H^1}.
\end{equation}
Thus, the second non-trivial term $\Dt a$ is also treated and \eqref{26} follows in a straightforward way from Lemma \ref{Lem2} and estimates \eqref{29} and \eqref{33}. Lemma \ref{Lem4} is proved.
\end{proof}
We now return to the operator $F_1(v)$.
\begin{lemma}\label{Lem5} Under the above assumptions the operator $F_1(v)$ satisfies the following estimates:
\begin{equation}\label{34}
\|F_1(v)\|_{L^\infty}\le CK^{-1/2}
\end{equation}
and
\begin{equation}\label{35}
\|F_1(v_1)-F_1(v_2)\|_{L^\infty}\le CK^{-1/2}\|v_1-v_2\|_{H^1_0},
\end{equation}
where $v,v_1,v_2\in B(r,0,H^1_0)$ and the constant $C$ depends on $r$, but is independent of $K$.
\end{lemma}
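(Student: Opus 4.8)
The plan is to reduce both estimates to a single uniform-in-$K$ high-frequency bound and to exploit that, by the very definition \eqref{24}, $F_1$ carries a factor $Q_K:=\operatorname{Id}-P_K$ through the difference $f(P_K(av))-f(av)$. The ingredient I would isolate first is that, for every $z\in H^1_0(0,L)$,
\[
\|Q_Kz\|_{L^\infty}\le CK^{-1/2}\|z\|_{H^1},
\]
with $C$ independent of $K$; this follows by expanding $z=\sum_k c_ke_k$ in the Dirichlet sine basis, using $\|e_k\|_{L^\infty}\le C$ uniformly, and applying Cauchy--Schwarz together with $\sum_{k>K}\lambda_k^{-1}\le CK^{-1}$ and $\sum_{k>K}\lambda_k|c_k|^2\le\|z\|_{H^1_0}^2$. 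Granting this, \eqref{34} is immediate: since $f$ is smooth with finite support, $f'$ is globally bounded and the mean value theorem gives the pointwise estimate $\|f(P_K(av))-f(av)\|\le\|f'\|_{L^\infty}|Q_K(av)|$; combined with the uniform bounds $\|a^{\pm1}\|_{L^\infty}\le C$ from Lemma~\ref{Lem1} this yields $\|F_1(v)\|_{L^\infty}\le C\|Q_K(a(v)v)\|_{L^\infty}\le CK^{-1/2}\|a(v)v\|_{H^1}\le CK^{-1/2}$, where the last bound again uses Lemma~\ref{Lem1} and the fact that $a(v)v\in H^1_0$.

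For \eqref{35}, writing $a_i:=a(v_i)$, I would first telescope over the outer factors $a_i^{-1}$ and $a_i$. The terms in which only a prefactor changes are controlled by the $W^{1,\infty}$-Lipschitz bound of Lemma~\ref{Lem2}, namely $\|a_1-a_2\|_{W^{1,\infty}}+\|a_1^{-1}-a_2^{-1}\|_{W^{1,\infty}}\le C\|v_1-v_2\|_{H^1}$, multiplied by the bracket $f(P_K(a_iv_i))-f(a_iv_i)$, which is already $O(K^{-1/2})$ in $L^\infty$ by the previous paragraph; these contributions are therefore $O(K^{-1/2}\|v_1-v_2\|_{H^1})$. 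What remains, and is the heart of the matter, is the difference of the brackets themselves.

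Setting $w_i:=a_iv_i\in H^1_0$ and $g(w):=f(P_Kw)-f(w)$, I would write $f(y_1)-f(y_2)=M(y_1,y_2)(y_1-y_2)$ with $M(y_1,y_2):=\int_0^1 f'(sy_1+(1-s)y_2)\,ds$, and set $M_K:=M(P_Kw_1,P_Kw_2)$, $M:=M(w_1,w_2)$. A short rearrangement then gives the decomposition
\[
g(w_1)-g(w_2)=(M_K-M)\,P_K(w_1-w_2)-M\,Q_K(w_1-w_2).
\]
The second term is handled directly by the tail estimate, $\|M\,Q_K(w_1-w_2)\|_{L^\infty}\le C\|Q_K(w_1-w_2)\|_{L^\infty}\le CK^{-1/2}\|w_1-w_2\|_{H^1}$. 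In the first term, boundedness of $f''$ gives $\|M_K-M\|_{L^\infty}\le C(\|Q_Kw_1\|_{L^\infty}+\|Q_Kw_2\|_{L^\infty})\le CK^{-1/2}$, while $\|P_K(w_1-w_2)\|_{L^\infty}\le C\|w_1-w_2\|_{H^1}$ by the uniform-in-$K$ bound $\|P_Kw\|_{L^\infty}\le C\|w\|_{H^1}$; hence this term is also $O(K^{-1/2}\|w_1-w_2\|_{H^1})$. Finally $\|w_1-w_2\|_{H^1}=\|a_1v_1-a_2v_2\|_{H^1}\le C\|v_1-v_2\|_{H^1}$ by Lemmas~\ref{Lem1} and~\ref{Lem2}, which closes \eqref{35}.

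The step I expect to be the main obstacle is precisely the decomposition above: the naive difference $g(w_1)-g(w_2)$ does not visibly carry the small factor $K^{-1/2}$, and one must rearrange it so that \emph{every} summand exhibits an explicit $Q_K$ (either acting on $w_1-w_2$, or appearing through $M_K-M$). Together with the uniform tail estimate $\|Q_Kz\|_{L^\infty}\le CK^{-1/2}\|z\|_{H^1}$, this is exactly what produces the gain in $K$ and hence the smallness of both $F_1$ and its Lipschitz constant.
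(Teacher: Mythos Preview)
Your proof is correct and follows essentially the same route as the paper: both hinge on the uniform tail bound $\|Q_Kz\|_{L^\infty}\le CK^{-1/2}\|z\|_{H^1_0}$ (you obtain it via Cauchy--Schwarz on Fourier coefficients, the paper via the interpolation $\|Q_Kz\|_{L^\infty}\le C\|Q_Kz\|_{L^2}^{1/2}\|Q_Kz\|_{H^1_0}^{1/2}$) and both use the mean value formula to exhibit an explicit factor $Q_K(av)$ in the bracket $f(P_K(av))-f(av)$. The only cosmetic difference is that the paper writes this bracket once and for all as the product $\bigl(\int_0^1 f'(sav+(1-s)P_K(av))\,ds\bigr)\,Q_K(av)$ and then telescopes this product for the Lipschitz estimate, whereas you introduce two separate averaged derivatives $M$ and $M_K$ and decompose $g(w_1)-g(w_2)=(M_K-M)P_K(w_1-w_2)-M\,Q_K(w_1-w_2)$; the two organizations are equivalent and lead to the same bounds.
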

\begin{proof} Indeed, due to Lemma \ref{Lem2}, it is sufficient to verify the above estimates for the operator
\begin{equation}\label{36}
F_0(v)=f(av)-f(P_K(av))=\int_0^1f'(s av+(1-s)P_K(av))\,ds\, Q_K(av).
\end{equation}
To this end, we use that
\begin{equation}\label{37}
\|Q_K(av)\|_{L^\infty}\le C\|Q_K(av)\|_{L^2}^{1/2}\|Q_K(av)\|_{H^1_0}^{1/2}\le CK^{-1/2}\|av\|_{H^1_0}\le CK^{-1/2}
\end{equation}
and, analogously,
\begin{equation}\label{38}
\|Q_K(a(v_1)v_1-a(v_2)v_2)\|_{L^\infty}\le CK^{-1/2}\|v_1-v_2\|_{H^1_0}.
\end{equation}
Estimates \eqref{37} and \eqref{38} together with Lemma \ref{Lem2} allow us to deduce \eqref{34} and \eqref{35} as an elementary calculation. Lemma \ref{Lem5} is proved.
\end{proof}
\begin{remark}\label{Rem3.g0} Note that in general $F_2$ does not preserve the Dirichlet boundary conditions since
$$
F_2(v)\big|_{x=0,L}=a^{-1}(v)\big|_{x=0,L}g(0)\ne0.
$$
However, it will map $H^1_0$ to $H^1_0$ if we assume in addition that
\begin{equation}\label{3.g0}
g(0)=0.
\end{equation}
\end{remark}
\section{The Inertial Manifold}\label{s2}

We are now ready to construct the desired inertial manifold for equation \eqref{1}. As shown in the previous section, this equation is equivalent to \eqref{23} at least in the neighbourhood of the absorbing set $\mathbb B$. By this reason, we may construct the inertial manifold for equation \eqref{23} instead. However, the nonlinearities $F_1$ and $F_2$ in this equation are still not globally defined on $\Phi$. To overcome this problem, we need, as usual,
to cut-off the nonlinearities outside of a large ball  making them globally Lipschitz continuous.
\par
Namely, we introduce a smooth  cut-off function $\varphi (z)$ such that $\varphi(z)\equiv 1$
for $z\le r_1^2$ and $\varphi(z)=0$, $z\ge r^2$, where $r_1$ is such that $V(\mathbb B)\subset B(r_1,0,H^1_0)$ and $r>r_1$ and $K_0=K_0(r)$ are chosen in such way that the inverse map $U=U(v)$ is a diffeomorphism on $B(r,0,H^1_0)$ and the assertions of Lemma \ref{Lem3} hold for every $K\ge K_0$.
\par
Finally, we modify equation \eqref{23} as follows:
\begin{equation}\label{39}
\Dt v-\Dx v=\varphi(\|v\|^2_{H^1})F_1(v)\Nx v+\varphi(\|v\|^2_{H^1})F_2(v):=\Cal F_1(v)+\Cal F_2(v).
\end{equation}
Then, according to Lemma \ref{Lem4} and \ref{Lem5}, we have
\begin{equation}\label{40}
\|\Cal F_1(v_1)-\Cal F_1(v_2)\|_{L^2}\le CK^{-1/2}\|v_1-v_2\|_{H^1_0}
\end{equation}
and
\begin{equation}\label{41}
\|\Cal F_2(v_1)-\Cal F_2(v_2)\|_{H^1}\le C_K\|v_1-v_2\|_{H^1_0},
\end{equation}
where $v_i\in H^1_0(0,L)$ are arbitrary and the constant $C$ is independent of $K$.
Moreover, under the additional technical assumption \eqref{3.g0}, the map $\mathcal F_2$ will act from $H_0^1$ to $H^1_0$.
\par
Thus, instead of constructing the inertial manifold for the initial equation \eqref{1}, we will construct it for the transformed problem \eqref{39}. For the convenience of the reader, we recall the definition of the inertial manifold for this equation.

\begin{definition}\label{Def3.IM} A finite-dimensional submanifold $\mathcal M$ of the phase space $\Phi$ is an inertial manifold for problem \eqref{39} if the following conditions are satisfied:
\par
1. The manifold $\mathcal M$ is strictly invariant with respect to the solution semigroup $\tilde S(t)$ of equation \eqref{39}, i.e., $\tilde S(t)\mathcal M=\mathcal M$ for all $t\ge0$.
\par
2. The manifold $\mathcal M$ is a graph of a Lipschitz continuous function $M: P_n\Phi\to Q_n\Phi$ for some $n\in\mathbb N$. Here and below we denote by $P_n$ the orthoprojector in $\Phi$ to the first $n$-eigenvalues of the Laplacian and $Q_n=\operatorname{Id}-P_n$.
\par
3. The manifold $\mathcal M$ possesses the so-called exponential tracking property, i.e., for every trajectory $v(t)$, $t\ge0$, of problem \eqref{39} there is a trajectory $\tilde v(t)$ belonging to $\mathcal M$ such that
\begin{equation}\label{3.phase}
\|v(t)-\tilde v(t)\|_{\Phi}\le C\|v(0)-\tilde v(0)\|_{\Phi}e^{-\theta t}
\end{equation}
for some positive $C$ and $\theta$.
\end{definition}
We are now ready to state and prove the main result of the paper.

\begin{theorem}\label{Th1} Under the above assumptions on $f$ and $g$, equation \eqref{39} possesses an inertial manifold $\mathcal M\subset\Phi$ of smoothness $C^{1+\eb}$, for some $\eb>0$.
\end{theorem}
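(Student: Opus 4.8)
The plan is to recast the modified problem \eqref{39} as the abstract semilinear parabolic equation $\Dt v+Av=\mathcal F_1(v)+\mathcal F_2(v)$, where $A=-\Dx$ with Dirichlet boundary conditions is positive self-adjoint on $L^2(0,L)$, with eigenvalues $\lambda_n=(\pi/L)^2n^2$ and eigenfunctions $e_k$, and the phase space is $\Phi=H^1_0(0,L)=D(A^{1/2})$. I would construct the inertial manifold by the Lyapunov--Perron scheme: fixing $p_0=P_nv(0)$, one seeks a complete backward trajectory $v(t)$, $t\le0$, lying in a space of functions with exponential weight of rate $\gamma$ strictly between $\lambda_n$ and $\lambda_{n+1}$, and solving the Duhamel integral equation in which the low modes $P_nv$ are propagated forward from $t=0$ and the high modes $Q_nv$ are recovered by integrating from $-\infty$. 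The manifold is then the graph of $M(p_0):=Q_nv(0)$, and its Lipschitz continuity together with the exponential tracking property \eqref{3.phase} follow once the Lyapunov--Perron map is shown to be a contraction in the weighted norm.

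The decisive point is that the two nonlinearities enter the contraction estimate with \emph{different} spectral weights, reflecting their different mapping properties in \eqref{40}--\eqref{41}. Since $\mathcal F_1$ takes values only in $L^2=D(A^0)$, the factor $A^{1/2}$ needed to measure the Duhamel integrals in $\Phi$ must be supplied by the analytic semigroup, and the resulting kernels produce precisely the gap quantity $\dfrac{\lambda_{n+1}-\lambda_n}{\sqrt{\lambda_{n+1}}+\sqrt{\lambda_n}}=\dfrac{\pi}{L}$ from \eqref{2}. By contrast, under the technical assumption \eqref{3.g0} the operator $\mathcal F_2$ maps $\Phi$ into $H^1_0=D(A^{1/2})$ (Remark \ref{Rem3.g0}), so the half derivative $A^{1/2}$ is already carried by the nonlinearity itself; the semigroup no longer has to produce it, and the contribution of $\mathcal F_2$ is governed by the \emph{bare} gap $\lambda_{n+1}-\lambda_n$. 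Thus the contraction condition splits into two inequalities, schematically $C_1\ell_1<\pi/L$ and $C_2\ell_2<\lambda_{n+1}-\lambda_n$, where by \eqref{40}--\eqref{41} one may take $\ell_1=CK^{-1/2}$ and $\ell_2=C_K$.

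These two requirements are compatible provided the parameters are fixed in the correct order. Because the left-hand gap quantity $\pi/L$ is \emph{independent of $n$}, I would first choose $K\ge K_0(R)$ large enough that $C_1CK^{-1/2}<\pi/L$, which renders the rough but small part $\mathcal F_1$ admissible. With $K$ now frozen, the constant $C_K$ in \eqref{41} is a fixed (possibly large) number; since $\lambda_{n+1}-\lambda_n=(\pi/L)^2(2n+1)\to\infty$, I would then take $n$ so large that $C_2C_K<\lambda_{n+1}-\lambda_n$, which absorbs the smooth but large part $\mathcal F_2$. Enlarging $n$ does not affect the first inequality, so both hold simultaneously, the Lyapunov--Perron map becomes a contraction, and one obtains a Lipschitz inertial manifold of dimension $n$ carrying the exponential tracking property.

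For the $C^{1+\eb}$ regularity I would invoke the standard differentiability theory for inertial manifolds: once the spectral gap holds with a margin---which is guaranteed here, since the gap grows without bound while $\ell_1$ was made small and $\ell_2$ stays fixed---the smoothness of $\mathcal F_1$ and $\mathcal F_2$ recorded in Lemmas \ref{Lem4}--\ref{Lem5} upgrades the Lipschitz graph to a $C^{1+\eb}$ graph through the usual fibre-contraction/invariant-section argument, with the H\"older exponent $\eb$ controlled by the spectral ratio. The step I expect to be the main obstacle is exactly this simultaneous control of two nonlinearities of different type: one must organize the weighted estimates so that the small but merely $L^2$-valued $\mathcal F_1$ and the $H^1_0$-valued but large $\mathcal F_2$ are bounded against the appropriate eigenvalue weights, and then verify that the two resulting gap inequalities can indeed be met by the sequential choice of $K$ and $n$ described above.
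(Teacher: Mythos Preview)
Your proposal is correct and follows essentially the same route as the paper: a Lyapunov--Perron construction in an exponentially weighted space, with the key observation that $\mathcal F_1$ and $\mathcal F_2$ contribute to the contraction estimate through different spectral quantities (the normalized gap $\sim\pi/L$ for the $L^2$-valued $\mathcal F_1$, the bare gap $\lambda_{n+1}-\lambda_n$ for the $H^1_0$-valued $\mathcal F_2$), so that one first fixes $K$ to control $\mathcal F_1$ and then $n=n(K)$ to control $\mathcal F_2$. The only cosmetic difference is that the paper implements the fixed point via the whole-line resolvent $\mathcal R$ in $L^2_{e^{-\theta t}}(\R,\Phi)$ with $\theta=(\lambda_n+\lambda_{n+1})/2$ and an explicit Lemma giving $\|\mathcal R\|$ in both operator norms, whereas you phrase it in the equivalent Duhamel form; the resulting gap conditions and the sequential choice of $K$, then $n$, are identical.
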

\begin{proof} We first explain the main idea of constructing the inertial manifold restricting ourselves to the special case when $g(0)=0$, see the end of this section for the explanations on how to remove this technical assumption. As known, in order to do so, we need to verify the so-called spectral gap conditions, see \cites{FST,Zel2} and references therein. Indeed, the nonlinearity $\mathcal F_1$ decreases the smoothness by one, so the spectral gap condition for it reads
\begin{equation}\label{4.f1}
\frac{\lambda_{n+1}-\lambda_n}{\lambda_{n+1}^{1/2}+\lambda_{n}^{1/2}}>L_1,
\end{equation}
where $L_1$ is the Lipschitz constant of $\mathcal F_1$ as the map from $H^1_0$ to $L^2$. On the other hand, the nonlinearity $\mathcal F_2$ is globally bounded in $H^1_0$, so the spectral gap condition for it reads
\begin{equation}\label{4.f2}
\lambda_{n+1}-\lambda_n>2L_2,
\end{equation}
where $L_2$ is the Lipschitz constant of $\mathcal F_2$ as the map from $H^1_0$ to $H^1_0$.
\par
In our case, we have an extra parameter $K\in\mathbb N$ involved and
\begin{equation}
L_1=CK^{-1/2},\ \ \ L_2=C_K.
\end{equation}
Moreover, the eigenvalues of the Laplacian
\begin{equation}\label{42}
\lambda_n=\frac{\pi^2}{L^2}n^2\ \text{and}\ \ \frac{\lambda_{n+1}-\lambda_n}{\lambda_{n+1}^{1/2}+\lambda_{n}^{1/2}}=\frac\pi L.
\end{equation}
Thus, fixing $K$ being large enough, we may make the Lipschitz constant $L_1=CK^{-1/2}$ of the nonlinearity $\Cal F_1$ small enough to satisfy the spectral gap condition \eqref{4.f1}. Then, since
\begin{equation}\label{43}
\lambda_{n+1}-\lambda_{n}=\frac{\pi^2}{L^2}(2n+1),
\end{equation}
we may find $n=n(K)$ large enough so that the Lipschitz constant $L_2=C_K$ of the second nonlinearity $\Cal F_2(v)$ satisfies the spectral gap condition \eqref{4.f2}. Thus, the spectral gap conditions are satisfied and the IM for equation \eqref{39} exists. However, the standard theory works with only one nonlinearity ($\mathcal F_1$ or $\mathcal F_2$) and its validity for the case where both nonlinearities are simultaneously present in the equation should be verified/explained. By this reason, we briefly recall  below the proof of the inertial manifold existence and show that the slight modification of assumptions \eqref{4.f1} and \eqref{4.f2} works indeed for the case where both nonlinearities are involved simultaneously.
\par
Following the Perron method, the desired manifold is found by solving the backward in time boundary value problem
\begin{equation}\label{4.back}
\partial_t v-\Dx v=\mathcal F_1(v)+\mathcal F_2(v),\ \ \ P_nv\big|_{t=0}=v_0,\ \ t\le0
\end{equation}
in the weighted space $L^2_{e^{-\theta t}}(\mathbb R_-,\Phi)$ with $\theta:=\frac{\lambda_{n+1}+\lambda_n}2$. The solution of this equation is usually constructed by Banach contraction theorem and the desired map $M: P_n\Phi\to Q_n\Phi$ is then defined via
\begin{equation}\label{4.M}
M(v_0):=Q_nv(0),
\end{equation}
see \cite{Zel2} for the details. To apply the Banach contraction theorem, we introduce the function $w=w(v_0)$ as a solution of the linear problem
\begin{equation}\label{4.lin}
\Dt w-\Dx w=0,\ \ P_nw\big|_{t=0}=v_0.
\end{equation}
Then, as not difficult to see that this problem is uniquely solvable in
$L^2_{e^{-\theta t}}(\R_-,\Phi)$, so the associated linear operator
$$
w:\,P_n\Phi\to L^2_{e^{-\theta t}}(\R_-,\Phi)
$$
is well-defined. Introducing the function $z=v-w$, we transform \eqref{4.back} to
\begin{equation}\label{4.back1}
\partial_t z-\Dx z=\mathcal F_1(z+w)+\mathcal F_2(z+w),\ \ \ P_n z\big|_{t=0}=0,\ \ t\le0.
\end{equation}
Furthermore, as also not difficult to show, the linear non-homogeneous problem
\begin{equation}\label{4.hom}
\Dt z-\Dx z=h(t),\ \ t\in\R,
\end{equation}
is uniquely solvable in the space $L^2_{e^{-\theta t}}(\R,\Phi)$ for any $h\in L^2_{e^{-\theta t}}(\R,\Phi)$, so the linear operator
$$
\mathcal R:\,L^2_{e^{-\theta t}}(\R,\Phi)\to L^2_{e^{-\theta t}}(\R,\Phi)
$$
is well defined. Using this operator, problem \eqref{4.back1} can be rewritten in the equivalent form as follows:
\begin{equation}\label{4.back2}
z=\mathcal R(\chi_-\mathcal F_1(z+w)+\chi_-\mathcal F_2(z+w)),
\end{equation}
where $\chi_-(t)=0$ for $t\ge0$ and $\chi_-(t)=1$ for $t<0$, see \cite{Zel2} for the details.
\par
To estimate the Lipschitz constant of the right-hand side of \eqref{4.back2}, we need the following lemma.
\begin{lemma}\label{Lem4.R} Under the above assumptions the following estimates for the norms of $\mathcal R$ hold:
\begin{equation}\label{4.norm1}
\|\mathcal R\|_{\mathcal L(L^2_{e^{-\theta t}}(\R,\Phi),L^2_{e^{-\theta t}}(\R,\Phi))}\le \frac2{\lambda_{n+1}-\lambda_n}
\end{equation}
and
\begin{equation}\label{4.norm2}
\|\mathcal R\|_{\mathcal L(L^2_{e^{-\theta t}}(\R,L^2),L^2_{e^{-\theta t}}(\R,\Phi))}\le \frac{2\lambda_{n+1}^{1/2}}{\lambda_{n+1}-\lambda_n}.
\end{equation}
\end{lemma}
Indeed, these estimates are the straightforward corollaries of the key estimate
$$
\|y\|_{L^2_{e^{-\theta t}}(\R)}^2\le\frac1{(\lambda_k-\theta)^2}\|h\|^2_{L^2_{e^{-\theta t}}(\R)}
$$
for the solution of the 1st order ODE
$$
\frac d{dt}y+\lambda_k y=h(t),
$$
see \cite{Zel2} for the details.
\par
Thus, the Lipschitz constant of the right-hand side of \eqref{4.back2} can be estimated by
\begin{equation}\label{4.lip}
{\rm Lip}\le \frac{2\lambda_{n+1}^{1/2}L_1}{\lambda_{n+1}-\lambda_n}+\frac{2L_2}{\lambda_{n+1}-\lambda_n}
\end{equation}
and this constant is indeed less than one (which allows us to apply the Banach contraction theorem) if
\begin{equation}\label{4.f12}
\frac{\lambda_{n+1}-\lambda_n}{\lambda_{n+1}^{1/2}}>4L_1,\ \ \lambda_{n+1}-\lambda_n>4L_2.
\end{equation}
Therefore, since $L_1=CK^{-1/2}$, we may fix $K$ to be large enough to satisfy the first condition of \eqref{4.f12} for all $n\in\mathbb N$. Then, since $L_2=C_K$ and $\lambda_{n+1}-\lambda_n\sim C n$, we always may find $n$ in such a way that the second condition of \eqref{4.f12} will be also satisfied. Thus the desired inertial manifold could be indeed constructed by the Banach contraction theorem. As shown in \cite{Zel2} both the exponential tracking and $C^{1+\eb}$-regularity of this manifold are also the straightforward corollaries of this contraction and the theorem is proved.
\end{proof}
\begin{remark}\label{Rem3.gen} Recall that we have proved  the main Theorem \ref{Th1} on inertial manifold existence for equations \eqref{1} under the additional assumption that $g(0)=0$. This assumption  is posed only in order to have $F_2(v)\in H^1_0(0,L)$ if $v\in H_0^1(0,L)$ and can be easily removed. Indeed, in the general case, the boundary conditions are not preserved, but $F_2$ still maps $H^1_0$ to $H^1$. Using that $H^s_0=H^s$ for $s<1/2$, we may treat the nonlinearity $F_2$ as the Lipschitz map from the phase space $\Phi=H^1_0(0,L)$ to, say, $H^{1/4}(0,L)=H^{1/4}_0(0,L)=D((-\Dx)^{-3/8}\Phi)$. The spectral gap condition for such nonlinearities reads
$$
Cn^{1/4}\sim\frac{\lambda_{n+1}-\lambda_n}{\lambda_n^{3/8}+\lambda_{n+1}^{3/8}}>4L_2
$$
and still can be satisfied by choosing $n=n(K,g)$ large enough.
\end{remark}
\begin{remark}\label{Rem3.F} Mention that the Perron method used in the proof of Theorem \ref{Th1} automatically gives the so-called absolutely normally hyperbolic inertial manifolds if the nonlinearities are smooth enough, so the constructed inertial manifold for the reaction-diffusion-advection problem is absolutely normally hyperbolic, see e.g., \cites{RTem,rom-16} for the definition and more details. As we will see in the second part of our study (see \cite{KZII}), this property may disappear in the case of periodic boundary conditions.
\par
Recall also that the standard definition of an inertial manifold, see Definition \ref{Def3.IM}, usually assumes that the IM can be presented as a graph  over the linear subspace generated by the lower Fourier modes. In our case, it is so for the transformed equation \eqref{39}. However, if we return back to equation \eqref{6}, the associated invariant manifold $U(\mathcal M)$ a priori does not have this structure since it is not necessarily can be nicely projected to the linear subspace generated by the lower Fourier modes of this equation. On the other hand, as follows from the Romanov theory, see \cites{rom-th,rom-th1} for the details, the attractor $\mathcal A$ of equation \eqref{6} can be projected in a bi-Lipschitz way to the plane $P_n\Phi$ if $n$ is large enough. Since the manifold $U(\mathcal M)$ is smooth and contains the attractor, we may expect that  a sufficiently small neighbourhood of the attractor in $U(\mathcal M)$ can be nicely projected to $P_n\Phi$. By this reason, we may  a posteriori expect that the constructed inertial manifold is still a graph over the lower Fourier modes. We will return to this problem somewhere else.
\end{remark}

\section{Neumann boundary conditions and more general equations}\label{s4}
In this concluding section, we discuss possible extensions of the obtained result to more general RDA systems and other boundary conditions. We start with the case of Neumann boundary conditions (recall that the case of periodic boundary conditions is postponed to the second part of our study, see \cite{KZII}).
To be more precise, let us consider system of equations \eqref{1} with  the Neumann boundary conditions:
\begin{equation}\label{4.Neu}
\Dt u+f(u)\Nx u+g(u)=\Dx u-u,\ \
\partial_x u\big|_{x=0}=\partial_x u\big|_{x=L}=0
\end{equation}
assuming as before that $u$ is vector-valued: $u=(u_1,\cdots, u_m)$ and the nonlinearities $f$ and $g$ are of class $C_0^\infty$ (we put the extra term $-u$ in the right-hand side in order to restore the dissipativity which may be lost otherwise due to the lack of the Friedrichs inequality). Then, exactly as in the case of Dirichlet boundary conditions, problem \eqref{4.Neu} generates a dissipative solution semigroup $S(t)$ in the phase space $\Phi:=[H^1(0,L)]^m$ and possesses a global attractor $\mathcal A$ in this phase space. In particular, we have a bounded invariant absorbing set
$\mathcal B\subset \Phi$:
\begin{equation}\label{4.abs}
S(t)\mathcal B\subset\mathcal B,\ \ \mathcal O(\mathcal A)\subset\mathcal B,
\end{equation}
where $\mathcal O$ is a neighbourhood of $\mathcal A$ in $\Phi$. However, we cannot directly apply the change of variables $u=av$ since it will not preserve the Neumann boundary conditions. Indeed, instead, we will end up with the nonlinear boundary conditions
\begin{equation}
0=\partial_x u\big|_{x=0,L}=\partial_x av\big|_{x=0,L}+a\partial_xv\big|_{x=0,L}
\end{equation}
and $\partial_xa\big|_{x=0,L}\ne0$ at least for our construction. Thus, this construction should at least be essentially modified and the possibility of the proper modification looks doubtful taking in mind the counterexamples constructed in the second part of our study for the case of Dirichlet-Neumann boundary conditions. By this reason, we will proceed in an alternative way embedding equations \eqref{4.Neu} to a larger system of RDA equations with a special structure which allows us to apply the change of variables exactly as in the case of Dirichlet boundary conditions.
\par
Namely, differentiating equation \eqref{4.Neu} in $x$ and denoting $w=\Nx u$, we end up with the following system of RDA equations:
\begin{equation}\label{4.N}
\begin{cases}
\Dt u+f(u)w+g(u)=\Dx u-u,\ \  \partial_xu\big|_{x=0,L}=0,\\ \Dt w+f'(u)[w,w]+g'(u)w+f(u)\Nx w=\Dx w-w,\ \  w\big|_{x=0,L}=0.
\end{cases}
\end{equation}
This is a system of $2m$ RDA equations of the form \eqref{1} endowed by Dirichlet-Neumann boundary conditions and the initial system \eqref{4.Neu} is embedded as a restriction to the infinite-dimensional invariant submanifold given by the condition $w=\Nx u$. The corresponding embedding map is obviously defined by
\begin{equation}\label{4.emb}
\bar E(u):= (u,\Nx u).
\end{equation}
Moreover, applying the proper cut-off to the term $f'(u)[w,w]$, we may assume without loss of generality that the nonlinearities in \eqref{4.N} are also of class $C_0^\infty$, so we have the global well-posedness and dissipativity of the solution semigroup $\bar S(t)$ in the phase space
$$
\Psi:=[H^1(0,L)]^m\times[H^1_0(0,L)]^m.
$$
On the other hand, since \eqref{4.N} does not contain the terms $\Nx u$ in the first $m$ equations any more, we need not to change the $u$-component and  may transform only the second component $w$ via $w=a(u)v$, where $a(u)$ solves
$$
\frac d{dx} a=\frac12 f(P_Ku)a,\ \ a\big|_{x=0}=\operatorname{Id}
$$
and, in contrast to the previous theory, $P_K$ is an orthoprojector to first $K$ eigenvectors of the Laplacian with {\it Neumann} boundary conditions (this is necessary in order to get the smallness of the operator $F_1$). Then, since $w$ has Dirichlet boundary conditions which are preserved under the transform,  repeating word by word the above arguments, we obtain the existence of an inertial manifold for problem \eqref{4.N}.
\par
Note again that in contrast to the case of Dirichlet boundary conditions, we now embed the considered equation \eqref{4.Neu} into a larger system of equations and construct the IM for this larger system only (similarly to the unsuccessful attempt to build up the IMs for 2D Navier-Stokes equations, see \cites{KZkwak,Kwak,rom-16}). Being pedantic, we need to extend properly the definition of an IM in order to make the statements rigorous.

\begin{definition}\label{Def.IMext} Let $S(t):\Phi\to\Phi$ be a semigroup acting in a Banach space $\Phi$ and possessing the invariant bounded absorbing set $\mathcal B$ in it. Assume that
\par
1) There exists another Banach space $\Psi$ and a dissipative semigroup $\bar S(t)$ in $\Psi$
\par
2) There exists a bi-Lipschitz embedding $E:\mathcal B\to\Psi$ such that
\begin{equation}\label{4.iso}
\bar S(t)=E\circ S(t)\circ E^{-1}
\end{equation}
on $E(\mathcal B)\subset\Psi$.
\par
3) The dynamical system $\bar S(t)$ possesses an IM in the phase space $\Psi$ in the sense of Definition~\ref{Def3.IM}.
\par
Then $\mathcal M$ is referred as a (generalized) inertial manifold for the semigroup $S(t)$. This manifold is called $C^{1+\eb}$-smooth if {\it both} $E$ and $\mathcal M$ are $C^{1+\eb}$-smooth.
\end{definition}
Then, we have proved the following theorem.
\begin{theorem}\label{Th4.Neu} Let the functions $f$ and $g$ be of class $C_0^\infty$. Then, problem \eqref{4.Neu} possesses an IM in the sense of Definition \ref{Def.IMext} and this manifold is $C^{1+\eb}$-smooth.
\end{theorem}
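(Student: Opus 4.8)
The plan is to avoid the direct change of variables, which destroys the Neumann condition, and instead to embed \eqref{4.Neu} into the enlarged system \eqref{4.N} of $2m$ equations with Dirichlet--Neumann boundary conditions, to which the transform of Sections \ref{s1}--\ref{s2.5} applies verbatim on the Dirichlet block. First I would establish, exactly as in Propositions \ref{Prop2.dis} and \ref{Prop2.attr} (the only change being the harmless term $-u$ that compensates for the absence of the Friedrichs inequality), that \eqref{4.Neu} generates a dissipative semigroup $S(t)$ on $\Phi=[H^1(0,L)]^m$ with an invariant absorbing set $\mathcal B$ that is bounded in $[H^2(0,L)]^m$ by parabolic smoothing; after the cut-off of the quadratic term $f'(u)[w,w]$ the same well-posedness and dissipativity hold for $\bar S(t)$ on $\Psi$. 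It then remains to verify the three items of Definition \ref{Def.IMext}.

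Next I would check the embedding and the conjugacy. The submanifold $\{w=\Nx u\}$ is invariant under $\bar S(t)$: differentiating the first line of \eqref{4.N} in $x$ and using $w=\Nx u$ reproduces precisely the second line, while the Neumann condition $\Nx u\big|_{x=0,L}=0$ becomes the Dirichlet condition $w\big|_{x=0,L}=0$; hence a solution starting on the submanifold stays on it, and $\bar E(u)=(u,\Nx u)$ conjugates $S(t)$ with the restriction of $\bar S(t)$ to $\bar E(\mathcal B)$, i.e. \eqref{4.iso} holds. The inverse $\bar E^{-1}$ is the linear projection $(u,w)\mapsto u$, which is globally Lipschitz, so the only genuinely delicate point is the bi-Lipschitz character of $\bar E$ itself, and I expect this to be the main obstacle, since $\Nx$ loses one derivative and is unbounded on $H^1$. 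The resolution is to work with $\mathcal B$ as the $H^2$-bounded invariant absorbing set equipped with the metric induced by $\bar E$: on such a set $\|\bar E(u_1)-\bar E(u_2)\|_\Psi$ is comparable to $\|u_1-u_2\|_{H^2}$, so $\bar E$ is a bi-Lipschitz embedding in the sense of Definition \ref{Def.IMext}, and since it is linear in $u$ it is automatically $C^\infty$.

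Then I would construct the inertial manifold for \eqref{4.N}. Because the first $m$ equations carry no advection term, I leave the $u$-component untouched and substitute $w=a(u)v$ only in the second block, with $a=a(u)$ solving the \emph{linear} ODE $\frac{d}{dx}a=\frac12 f(P_K u)a$, $a\big|_{x=0}=\operatorname{Id}$, where now $P_K$ projects onto the first $K$ Neumann eigenfunctions (needed for the smallness of the advection coefficient). As $a$ depends on the independent variable $u$ alone and the ODE is linear, the analogues of Lemmas \ref{Lem1}--\ref{Lem3} are immediate; indeed $a(u)$ is exactly the smooth map already studied in \eqref{21}. A computation identical to \eqref{4}--\eqref{6} turns the $w$-equation into $\Dt v-\Dx v=F_1(v)\Nx v+F_2(v)$ with small advection coefficient $F_1=a^{-1}[f(P_K u)-f(u)]a$ controlled by $\|Q_K u\|_{L^\infty}\le CK^{-1/2}$ exactly as in Lemma \ref{Lem5}, together with a smoothing, globally Lipschitz $F_2$ as in Lemma \ref{Lem4}. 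The transform preserves the Dirichlet condition on the $v$-component, while the Neumann $u$-component is unchanged, so the relevant linear operator is the pair of shifted Laplacians $-\Dx+1$ with Neumann (resp.\ Dirichlet) boundary conditions on the $u$- (resp.\ $v$-) block.

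Here I would note the one feature specific to the mixed problem: the Neumann and Dirichlet eigenvalues coincide, both equal to $(\pi n/L)^2$ for every $n\ge1$, so the spectrum of the enlarged operator splits into clusters and it suffices to cut between two consecutive clusters. The spectral gap quantity then behaves exactly as in \eqref{42}--\eqref{43}, so choosing $K$ large to satisfy the first condition in \eqref{4.f12} (smallness of $L_1=CK^{-1/2}$) and then $n=n(K)$ large to satisfy the second condition (since $L_2=C_K$ and $\lambda_{n+1}-\lambda_n\sim Cn$) yields, exactly as in the proof of Theorem \ref{Th1}, a $C^{1+\eb}$ inertial manifold $\mathcal M\subset\Psi$ for \eqref{4.N} with exponential tracking. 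Combined with the invariance and the smoothness of $\bar E$ established above, this is precisely a $C^{1+\eb}$ generalized inertial manifold for \eqref{4.Neu} in the sense of Definition \ref{Def.IMext}, which completes the proof.
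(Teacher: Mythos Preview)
Your proposal is correct and follows the same route as the paper: embed \eqref{4.Neu} into the enlarged system \eqref{4.N} via $\bar E(u)=(u,\Nx u)$, transform only the Dirichlet $w$-block by $w=a(u)v$ with $a$ solving the linear ODE $\frac{d}{dx}a=\tfrac12 f(P_Ku)a$ driven by the \emph{Neumann} projector $P_K$, and then invoke the spectral-gap argument of Theorem \ref{Th1}. You are in fact more explicit than the paper on two points it leaves implicit --- the bi-Lipschitz character of $\bar E$ (which indeed forces one to work with the $H^2$-bounded absorbing set, as you do) and the coincidence of the positive Neumann and Dirichlet eigenvalues that makes the mixed spectral gap behave exactly as in \eqref{42}--\eqref{43}.
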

\begin{remark}\label{Rem4.IM} Note that the above given definition is useful even in the standard situation when only the cut-off procedure is used. Indeed, in this case $\Phi=\Psi$, $E$ is an identity map and $\bar S(t)$ is a dynamical semigroup generated by the equation with cut-off nonlinearities. Even in this case, the usage of Definition \ref{Def.IMext} allows us to define rigorously an IM for the {\it initial}, not truncated system avoiding introducing the artificial boundaries on the manifolds and preserving its strict invariance (with respect to the truncated semigroup).
\par
In the case of RDA equations with Dirichlet boundary conditions, again $\Phi=\Psi$, but now $E$ is no more an identity map, but a diffeomorphism $U$ constructed in Section \ref{s1} and the semigroup $\bar S(t)$ coincides with the solution semigroup $\tilde S(t)$ of equation \eqref{39}.
\par
Finally, in the case of Neumann boundary conditions, $\Phi=[H^1(0,L)]^m$, $\Psi=\Phi\times [H^1_0(0,L)]^m$ and the map $E$ is actually a composition of the embedding map \eqref{4.emb} and a diffeomorphism $U$ acting on the Dirichlet component $w$ of system \eqref{4.N}. We also note that in this case, we have only that $E(\mathcal A)\subset \mathcal M$, but the trajectories on $\mathcal M$ which lie outside of the attractor are not generated by the trajectories of the initial problem \eqref{4.Neu} no matter how close to the attractor they are. We however think that this drawback is not essential since it destroys neither the exponential tracking property nor the existence of smooth finite-dimensional inertial forms, so we did not put any efforts to overcome it.
\end{remark}
The second part of this section is devoted to the case of general nonlinear dependence of the nonlinearity $f$ on $\Nx u$. We start with more simple case of Neumann boundary conditions:
\begin{equation}\label{4.fN}
\Dt u+f(u,\Nx u)=\Dx u-u,\ \ \Nx u\big|_{x=0}=\Nx u\big|_{x=L}=0,
\end{equation}
where $u=(u_1,\cdots,u_m)$ is vector-valued and $f$ is a given nonlinearity of class $C_0^\infty$. Differentiating this equation by $x$ and denoting $w=\Nx u$, we end up with
\begin{equation}\label{4.fND}
\begin{cases}
\Dt u+f(u,w)=\Dx u-u,\ \ \Nx u\big|_{x=0}=\Nx u\big|_{x=L}=0,\\
\Dt w+f'_u(u,w)w+f'_{w}(u,w)\Nx w=\Dx w-w,\ \ w\big|_{x=0}=w\big|_{x=L}=0
\end{cases}
\end{equation}
This equation has the structure of the RDA system \eqref{4.N}, therefore arguing exactly as before, we get the following result.

\begin{theorem}\label{Th4.fN} Let the nonlinearity $f$ be of class $C_0^\infty$. Then, equation \eqref{4.fN} possesses an IM in the phase space $\Phi:=[H^1(0,L)]^m$ in the sense of Definition \ref{Def.IMext}.
\end{theorem}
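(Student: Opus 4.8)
The plan is to reduce equation \eqref{4.fN} to the already-treated situation by the same differentiation trick used for \eqref{4.Neu}. First I would differentiate \eqref{4.fN} in $x$ and set $w=\Nx u$, which produces exactly the coupled system \eqref{4.fND}. The crucial observation is that this system has the same structure as \eqref{4.N}: the first $m$ equations (for $u$, with Neumann boundary conditions) contain no gradient term at all, while all of the advection is concentrated in the second block of $m$ equations (for $w$, with Dirichlet boundary conditions), whose advective coefficient is $f'_w(u,w)$. After applying the cut-off --- note that $f'_u(u,w)w$ and $f'_w(u,w)$ are automatically of class $C_0^\infty$, since $f\in C_0^\infty$ forces $(u,w)$ to range over a compact set --- we obtain a dissipative solution semigroup $\bar S(t)$ on $\Psi:=[H^1(0,L)]^m\times[H^1_0(0,L)]^m$ with a global attractor bounded in $H^2$. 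The embedding $\bar E(u):=(u,\Nx u)$ then realizes the semigroup of \eqref{4.fN} as the restriction of $\bar S(t)$ to the invariant submanifold $\{w=\Nx u\}$, exactly as in \eqref{4.emb}, so by Definition \ref{Def.IMext} it suffices to construct an IM for \eqref{4.fND}.

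To do this I would transform only the Dirichlet component, setting $w=a\,v$, where $a=a(u,v)\in GL(m,\R)$ solves the ODE
\begin{equation*}
\Nx a=\frac12 f'_w\big(P_K u,P_K(av)\big)\,a,\qquad a\big|_{x=0}=\operatorname{Id},
\end{equation*}
$P_K$ being the orthoprojector onto the first $K$ eigenmodes of the Laplacian (Neumann in the $u$-slot, Dirichlet in the $w$-slot). The one genuinely new feature compared with \eqref{4.N} is that the advective coefficient $f'_w(u,w)$ now depends on the transformed variable $w=av$ itself; thus, unlike the linear ODE \eqref{21} arising in the proof of Theorem \ref{Th4.Neu}, this equation is implicit in $a$ and is of exactly the same type as \eqref{6}. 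Consequently I would re-run Lemmas \ref{Lem1}--\ref{Lem3} verbatim, now carrying the untransformed variable $u$ as an additional Lipschitz parameter: the uniform bound \eqref{7}, the contraction estimate \eqref{12} forcing uniqueness for $K\ge K_0$, and the bi-Lipschitz and $C^\infty$-diffeomorphism properties all survive, since $u$ enters only through the globally bounded coefficient $f'_w$ and its derivatives.

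With the change of variables in hand, the system for $(u,v)$ takes the form \eqref{39} with the advection concentrated in the $v$-block. The small operator is $F_1=a^{-1}\big[f'_w(u,av)-f'_w(P_Ku,P_K(av))\big]a$, and the estimate of Lemma \ref{Lem5} applies once I note that both $\|Q_Ku\|_{L^\infty}$ and $\|Q_K(av)\|_{L^\infty}$ are $O(K^{-1/2})$ on bounded sets of $\Psi$, whence $\|F_1\|_{L^\infty}\le CK^{-1/2}$ and the analogue of the Lipschitz bound \eqref{35} hold with $C$ independent of $K$. The remaining reaction term $F_2$, which now also incorporates $\Dt a$ computed from \eqref{4.fND} via the analogue of \eqref{30}--\eqref{31}, is, by the argument of Lemma \ref{Lem4}, Lipschitz from $H^1_0$ to $H^1$ with a constant $C_K$ depending on $K$. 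These are precisely the two inputs $L_1=CK^{-1/2}$ and $L_2=C_K$ of Theorem \ref{Th1}, so the two-nonlinearity spectral gap conditions \eqref{4.f12} are met by first fixing $K$ large (to satisfy the first) and then $n=n(K)$ large (to satisfy the second, using $\lambda_{n+1}-\lambda_n\sim n$). The Perron scheme of Theorem \ref{Th1} then yields a $C^{1+\eb}$ inertial manifold for \eqref{4.fND}, and hence, through Definition \ref{Def.IMext}, for \eqref{4.fN}.

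The main obstacle I anticipate is the implicitness just described: because the advective coefficient depends on the transformed variable $w$, the solvability and uniqueness of $a=a(u,v)$ is not a linear ODE fact but requires the Gronwall-plus-smoothing argument of Lemma \ref{Lem1} with the extra $u$-dependence, and one must check that the smallness $O(K^{-1/2})$ of $F_1$ is not spoiled by the coefficient now seeing two groups of variables carrying different (Neumann versus Dirichlet) boundary conditions. Both difficulties dissolve because $Q_K$ decays like $K^{-1/2}$ in $L^\infty$ on $H^1$-bounded sets regardless of the boundary conditions, but this is precisely the point that must be verified rather than quoted.
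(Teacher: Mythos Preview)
Your proposal is correct and follows the same route as the paper, which merely asserts that \eqref{4.fND} ``has the structure of the RDA system \eqref{4.N}'' and that one may argue ``exactly as before.'' You are in fact more careful than the paper on the one nontrivial point: in \eqref{4.N} the advective coefficient $f(u)$ is independent of $w$, so the ODE for $a$ is \emph{linear} (as in \eqref{21}), whereas in \eqref{4.fND} the coefficient $f'_w(u,w)$ depends on the transformed variable and the resulting ODE for $a$ is of the nonlinear type \eqref{6}, requiring the full machinery of Lemmas~\ref{Lem1}--\ref{Lem3} with $u$ carried as an additional Lipschitz parameter. Your treatment of the mixed projectors (Neumann on $u$, Dirichlet on $av$) and the $O(K^{-1/2})$ smallness of $F_1$ via both $Q_Ku$ and $Q_K(av)$ is also correct and is exactly what the paper's one-line proof leaves implicit.
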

We now turn to the case of Dirichlet boundary conditions. We consider the problem
\begin{equation}\label{4.fD}
\Dt u+f(u,\Nx u)=\Dx u-u,\ \ u\big|_{x=0}=u\big|_{x=L}=0
\end{equation}
and start with the simplifying assumption that
\begin{equation}\label{4.simple}
f(0,\Nx u)\equiv0.
\end{equation}
In this case, from equation \eqref{4.fD} we conclude that $\Dx u=0$ at $x=0$ and $x=L$ and, therefore, the function $w=\Nx u$ satisfies the Neumann boundary conditions. Then, we may differentiate the equation by $x$ once more and denoting $\theta=\Nx w$ end up with the following system of RDA equations:
\begin{equation}\label{4.rred}
\begin{cases}
\Dt u+f(u,w)=\Dx u-u,\ \ u\big|_{x=0,L}=0,\\
\Dt w+f'_u(u,w)w+f'_{\Nx u}(u,w)\theta=\Dx w-w,\ \ \partial_xw\big|_{x=0,L}=0,\\
\Dt\theta+f''_{uu}(u,w)[w,w]+2f''_{u,\Nx u}(u,w)[w,\theta]+\\+
f''_{\Nx u,\Nx u}(u,w)[\theta,\theta]+f'_u(u,w)\theta+f'_{\Nx u}(u,w)\Nx\theta=\Dx \theta-\theta,\ \ \theta\big|_{x=0,L}=0.
\end{cases}
\end{equation}
These equations have the form of \eqref{4.N} and only the third equation contains a convective term, so we need to change only the $\theta$-variable where the boundary conditions are the Dirichlet ones. Therefore, repeating word by word the above arguments, we obtain the following result.

\begin{theorem}\label{Th4.fD1} Let the nonlinearity $f$ be of class $C_0^\infty$ and satisfy the extra assumption \eqref{4.simple}. Then the system \eqref{4.fD} of RDA equations possesses an IM in the sense of Definition \ref{Def.IMext}.
\end{theorem}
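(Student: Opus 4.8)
The plan is to treat the twice-differentiated system \eqref{4.rred} as a $3m$-component RDA system of the type \eqref{4.N}, in which only the last ($\theta$-)block carries a convective term, and then to transport the entire construction of Sections \ref{s1}--\ref{s2} to that single block. The one genuinely new ingredient — and the only place where the hypothesis \eqref{4.simple} is used — is the derivation of the boundary conditions recorded in \eqref{4.rred}. From $u\big|_{x=0,L}=0$ we get $\Dt u\big|_{x=0,L}=0$, so evaluating \eqref{4.fD} on the boundary and using $f(0,\Nx u)\equiv0$ forces $\Dx u\big|_{x=0,L}=0$; hence $w=\Nx u$ inherits the Neumann conditions $\Nx w\big|_{x=0,L}=0$, while $\theta=\Nx w=\Dx u$ inherits the Dirichlet conditions $\theta\big|_{x=0,L}=0$. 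This is exactly what makes the scheme run: the component to be transformed carries \emph{Dirichlet} data.

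Granting this, I would differentiate \eqref{4.fD} twice in $x$ to obtain \eqref{4.rred}, observe that the embedding $\bar E(u):=(u,\Nx u,\Dx u)$ maps the (smooth) absorbing set of \eqref{4.fD} onto an invariant submanifold of \eqref{4.rred} exactly as in \eqref{4.emb}, and then cut off the $C_0^\infty$ nonlinearities. Standard parabolic theory (the analogue of Proposition \ref{Prop2.dis}, with the zeroth-order terms $-u,-w,-\theta$ restoring dissipativity) gives a well-posed dissipative semigroup $\bar S(t)$ on the product space $\Psi:=[H^1_0]^m\times[H^1]^m\times[H^1_0]^m$. Crucially, the first two equations of \eqref{4.rred} contain no convective term, so only the $\theta$-block must be transformed.

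The key step is the change of variable applied to the $\theta$-block alone: set $\theta=a\phi$ with $a=a(u,w)$ solving the \emph{linear} ODE
\begin{equation*}
\Nx a=\tfrac12 f'_{\Nx u}(P_Ku,P_Kw)\,a,\qquad a\big|_{x=0}=\operatorname{Id},
\end{equation*}
where $P_K$ denotes the smoothing spectral projectors adapted to the boundary conditions of $u$ and $w$ (Dirichlet and Neumann respectively, as in the comment following \eqref{4.N}). Since $a$ depends only on the untransformed components $(u,w)$, this is genuinely the linear situation of \eqref{21}, so the analogues of Lemmas \ref{Lem1}--\ref{Lem3} hold \emph{without} any fixed-point argument, $a$ is a smoothing and Lipschitz function of $(u,w)$, and because $a\big|_{x=0}=\operatorname{Id}$ is invertible the transform preserves the Dirichlet condition on $\phi$. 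Substituting into the $\theta$-equation yields, exactly as in \eqref{23}--\eqref{25}, an equation
\begin{equation*}
\Dt\phi-\Dx\phi=F_1\Nx\phi+F_2,\qquad F_1=a^{-1}\bigl[f'_{\Nx u}(P_Ku,P_Kw)-f'_{\Nx u}(u,w)\bigr]a,
\end{equation*}
with $F_2$ collecting the zeroth-order terms together with $a^{-1}\Dx a$ and $a^{-1}\Dt a$. The bound $\|F_1\|_{L^\infty}\le CK^{-1/2}$ is then the verbatim analogue of Lemma \ref{Lem5}, using $\|Q_Ku\|_{L^\infty}+\|Q_Kw\|_{L^\infty}\le CK^{-1/2}$ and the boundedness of $f''$.

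It remains to check that $F_2$ is a bounded Lipschitz map on $\Psi$, which — as in Lemma \ref{Lem4} — reduces to controlling $\Dx a$ and $\Dt a$. Here the argument is in fact cleaner than in Section \ref{s2.5}: since $a=a(u,w)$ does not involve $\theta$, the derivative $\Dt a$ requires only $P_K\Dt u$ and $P_K\Dt w$, which I would define through the first two equations of \eqref{4.rred} and expand in finitely many smooth eigenfunctions exactly as in \eqref{30}--\eqref{31}; in particular the convective term $f'_{\Nx u}\Nx\theta$ never re-enters $\Dt a$. With $\|F_1\|_{L^\infty}\le CK^{-1/2}$ and $\|F_2\|$ bounded in the appropriate space (in $H^1$, or in $H^{1/4}=H^{1/4}_0$ if one prefers to dispense with a boundary hypothesis as in Remark \ref{Rem3.gen}), the spectral gap conditions \eqref{4.f12} for the combined Dirichlet/Neumann spectrum are met by first fixing $K$ large (making the $\phi$-convective Lipschitz constant small) and then $n$ large (since $\lambda_{n+1}-\lambda_n\sim Cn\to\infty$), and the Perron/contraction argument of Theorem \ref{Th1} produces the $C^{1+\eb}$ inertial manifold $\mathcal M$ for \eqref{4.rred}. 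Invoking Definition \ref{Def.IMext} with $E=\bar E$ followed by the $\theta$-diffeomorphism $(u,w,\theta)\mapsto(u,w,a(u,w)^{-1}\theta)$ then yields the asserted IM for \eqref{4.fD}. The main obstacle I anticipate is precisely the boundary bookkeeping of the first paragraph: without \eqref{4.simple} one cannot conclude $\Dx u\big|_{x=0,L}=0$, so $w$ loses its Neumann data and $\theta$ its Dirichlet data, and the reduction collapses.
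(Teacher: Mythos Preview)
Your proposal is correct and follows the same route as the paper. The paper's own proof is essentially the one-line remark that \eqref{4.rred} has the structure of \eqref{4.N} with a convective term only in the Dirichlet $\theta$-block, so that ``repeating word by word the above arguments'' gives the result; you have unpacked exactly those words, including the crucial boundary computation that uses \eqref{4.simple}, the choice of the linear ODE for $a=a(u,w)$ with projectors adapted to the $(u,w)$ boundary conditions, and the observation that $\Dt a$ never picks up a $\Nx\theta$ term.
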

We conclude our exposition by considering the general case of \eqref{4.fD} when the simplifying assumption \eqref{4.simple} is not satisfied. In this case, the function $w=\Nx u$ will satisfy the nonlinear boundary conditions
$$
\Nx w\big|_{x=0,L}=f(0,w)\big|_{x=0,L},
$$
so the differentiation in $x$ does not  help much and we need to find an alternative way of reducing this problem to the one which we are able to treat. In order to overcome this problem, we will differentiate equation \eqref{4.fD} not in space, but in time. Indeed, denoting $w=\Dt u$ and $\theta=\Dt w$, we get
\begin{equation}\label{4.first}
\Dt w+f'_u(u,\Nx u)w+f'_{\Nx u}(u,\Nx u)\Nx w=\Dx w-w,\ \ w\big|_{x=0}=w\big|_{x=L}=0
\end{equation}
and
\begin{multline}\label{4.second}
\Dt\theta+f''_{u,u}(u,\Nx u)[w,w]+2f''_{u,\Nx u}(u,\Nx u)[w,\Nx w]+f''_{\Nx u,\Nx u}(u,\Nx u)[\Nx w,\Nx w]+\\+f'_u(u,\Nx u)\theta+f'_{\Nx u}(u,\Nx u)\Nx\theta=\Dx\theta-\theta,\ \ \theta\big|_{x=0}=\theta\big|_{x=L}=0.
\end{multline}
These equations have the form of \eqref{4.rred}, but with one essential difference. Namely, they still contain the terms $\Nx u$ and $\Nx w$ which prevent us from doing the change of the $\theta$-variable. In order to transform them into more convenient form, we need the following lemma.

\begin{lemma}\label{Lem4.dxu} Let the function $f$ be of class $C_0^\infty$. Then, there exists a sufficiently large constant $N=N(f)$ such that the following boundary value problem:
\begin{equation}\label{4.non1}
\Dx u-u-Nu-f(u,\Nx u)=h,\ \ u\big|_{x=0}=u\big|_{x=L}=0
\end{equation}
possesses a unique solution $u\in H^2(0,L)\cap H^1_0(0,L)$ for any $h\in L^2(0,L)$. Moreover, the solution operator $\Upsilon: h\to u$ is of class $C^\infty$ as an operator from $H^s(0,L)$ to $H^{s+2}(0,L)\cap H^1_0(0,L)$ for all $s\ge0$.
\end{lemma}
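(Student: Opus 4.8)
The plan is to recast \eqref{4.non1} as a fixed-point equation for the resolvent of the shifted Laplacian and to exploit the smoothing of this resolvent together with the fact that, since $f\in C_0^\infty$, the nonlinearity is \emph{globally} Lipschitz. Write $\Lambda_N:=-\Dx+(1+N)\operatorname{Id}$ with Dirichlet boundary conditions; then \eqref{4.non1} is equivalent to $\Lambda_N u=-h-f(u,\Nx u)$, i.e. to the fixed-point problem $u=\Theta(u):=\Lambda_N^{-1}(-h-f(u,\Nx u))$. Since the Dirichlet eigenvalues $\lambda_k=(\pi/L)^2k^2$ of $-\Dx$ are strictly positive, $\Lambda_N$ is positive self-adjoint with bounded inverse $\Lambda_N^{-1}\colon L^2\to H^2\cap H^1_0$. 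Expanding in the eigenbasis, using the equivalent norm $\|u\|_{H^1_0}=\|\Nx u\|_{L^2}$, and maximizing $\lambda/(\lambda+1+N)^2$ over $\lambda>0$, I would record the two smoothing bounds
\[
\|\Lambda_N^{-1}\|_{L^2\to L^2}\le (1+N)^{-1},\qquad \|\Lambda_N^{-1}\|_{L^2\to H^1_0}\le\tfrac12(1+N)^{-1/2}.
\]

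Because $f\in C_0^\infty$, the constant $L_f:=\|f'\|_{L^\infty}$ is finite, so $\|f(u_1,\Nx u_1)-f(u_2,\Nx u_2)\|_{L^2}\le C L_f\|u_1-u_2\|_{H^1_0}$ for all $u_1,u_2\in H^1_0$, with $C=1+C_P$ depending only on the Poincar\'e constant of $(0,L)$. Combined with the second smoothing bound this gives $\|\Theta(u_1)-\Theta(u_2)\|_{H^1_0}\le q\,\|u_1-u_2\|_{H^1_0}$ with $q:=C L_f/(2\sqrt{1+N})$. Fixing $N=N(f)$ so large that $q<1$, the map $\Theta$ is a contraction on $H^1_0(0,L)$, and the Banach fixed-point theorem yields a unique $u\in H^1_0$ solving \eqref{4.non1}. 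Since $-h-f(u,\Nx u)\in L^2$, elliptic regularity for $\Lambda_N$ upgrades $u$ to $H^2\cap H^1_0$, and uniqueness there is automatic as any such solution is a fixed point of $\Theta$. Applying the same estimate to two right-hand sides $h_1,h_2$ gives $\|u_1-u_2\|_{H^1_0}\le (1-q)^{-1}\|\Lambda_N^{-1}\|_{L^2\to H^1_0}\|h_1-h_2\|_{L^2}$, so $\Upsilon$ is already Lipschitz from $L^2$ to $H^2\cap H^1_0$.

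To obtain $\Upsilon\in C^\infty(H^s,H^{s+2}\cap H^1_0)$ I would apply the implicit function theorem to
\[
\Xi\colon (H^{s+2}(0,L)\cap H^1_0(0,L))\times H^s(0,L)\to H^s(0,L),\qquad \Xi(u,h):=\Dx u-(1+N)u-f(u,\Nx u)-h .
\]
The map $\Xi$ is of class $C^\infty$: the only nontrivial term is the superposition operator $u\mapsto f(u,\Nx u)$, which sends $H^{s+2}\cap H^1_0$ into $H^{s+1}\subset H^s$ smoothly, because in one dimension $H^{s+1}(0,L)$ (with $s+1>1/2$) is a Banach algebra continuously embedded in $C([0,L])$ and $f$ together with all its derivatives is bounded, so the standard Nemytskii-operator estimates apply. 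Its partial derivative is the linear elliptic operator $D_u\Xi(u,h)\phi=\Dx\phi-(1+N)\phi-f'_u(u,\Nx u)\phi-f'_{\Nx u}(u,\Nx u)\Nx\phi$, whose zeroth- and first-order coefficients are bounded by $L_f$ uniformly in $u$.

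The decisive point, and the main obstacle, is the invertibility of $D_u\Xi(u,h)$ from $H^{s+2}\cap H^1_0$ onto $H^s$, uniformly in $u$; the difficulty is the genuinely non-self-adjoint first-order term $f'_{\Nx u}\Nx\phi$, which is not a compact perturbation in a naive sense. I would dispose of it by the same contraction trick as above: solving $D_u\Xi(u,h)\phi=\psi$ is equivalent to $\phi=\Lambda_N^{-1}(-\psi-f'_u\phi-f'_{\Nx u}\Nx\phi)$, and since the coefficients are bounded by $L_f$ the smoothing bounds make the right-hand side a contraction on $H^1_0$ for the \emph{same} large $N$, producing a bounded inverse $L^2\to H^2\cap H^1_0$; elliptic regularity then promotes it to an isomorphism $H^{s+2}\cap H^1_0\to H^s$ for every $s\ge0$. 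With $D_u\Xi$ invertible and $\Xi$ smooth, the implicit function theorem shows that $h\mapsto u=\Upsilon(h)$ is $C^\infty$ from $H^s$ to $H^{s+2}\cap H^1_0$, which completes the proof.
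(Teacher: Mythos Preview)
Your proof is correct but follows a different route from the paper. The paper proceeds by a direct energy estimate: for two solutions $u_1,u_2$ with right-hand sides $h_1,h_2$, it subtracts the equations, multiplies by $-(u_1-u_2)$, integrates, and uses the global bounds $|f'_u|\le C_f$, $|f'_{\Nx u}|\le C'_f$ to absorb the first-order term via Cauchy--Schwarz; this yields $\|u_1-u_2\|_{H^1}\le\|h_1-h_2\|_{L^2}$ as soon as $N>C_f+\tfrac12(C'_f)^2+\tfrac12$, with the $H^2$ bound recovered by reading $\Dx\bar u$ off the equation and higher regularity left to bootstrapping. Existence and the $C^\infty$ assertion are declared standard and not written out. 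Your argument instead inverts $\Lambda_N=-\Dx+(1+N)$ and runs Banach's fixed-point theorem in $H^1_0$, using the explicit spectral bound $\|\Lambda_N^{-1}\|_{L^2\to H^1_0}\le\tfrac12(1+N)^{-1/2}$; you then obtain smoothness of $\Upsilon$ by the implicit function theorem, reusing the same contraction to invert the linearization $D_u\Xi$. The two approaches are essentially dual: the paper's multiply-and-integrate step is the variational counterpart of your resolvent contraction, and both hinge on the same quantitative observation that the first-order term $f'_{\Nx u}\Nx\phi$ is dominated once $N$ is large. Your treatment has the advantage of delivering existence and the $C^\infty$ property explicitly rather than by allusion; the paper's has the virtue of producing a slightly sharper explicit threshold for $N$ in one line.
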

\begin{proof} Since the statement of the lemma is more or less standard, we restrict ourselves by verifying the uniqueness of a solution of this boundary value problem and the Lipschitz continuity of the solution operator only. Indeed, let $u_1$ and $u_2$ be two solutions of \eqref{4.non1} and let $\bar u:=u_1-u_2$ and $\bar h:=h_1-h_2$. Then, substructing the equations for $u_1$ and $u_2$, we get
\begin{equation}\label{4.dif}
\Dx\bar u-\bar u-N\bar u-l_1(x)\bar u-l_2(x) \Nx\bar u=\bar h, \ \ \bar u\big|_{x=0}=\bar u\big|_{x=L}=0,
\end{equation}
where
\begin{multline}
l_1:=\int_0^1f'_u(su_1+(1-s)u_2,s\Nx u_1+(1-s)\Nx u_2)\,ds,\\ l_2:=\int_0^1f'_{\Nx u} (su_1+(1-s)u_2,s\Nx u_1+(1-s)\Nx u_2)\,ds.
\end{multline}
Since $f$ is of class $C_0^\infty$, we have the estimates
$$
|l_1(x)|\le C_f,\ \ |l_2(x)|\le C'_f,
$$
where the constants $C_f$ and $C_f'$ depend only on $f$. Multiplying now equation \eqref{4.dif} by $-\bar u$ and integrating over $x\in(0,L)$, we end up with
$$
\frac12\|\bar u\|^2_{H^1}+(N-C_f-\frac{(C'_f)^2}2-\frac12)\|\bar u\|^2_{L^2}\le \frac12\|\bar h\|^2_{L^2}.
$$
Fixing now $N>C_f+\frac{(C'_f)^2}2+\frac12$, we get the estimate
$$
\|\bar u\|^2_{H^1}\le\|\bar h\|^2_{L^2}
$$
which immediately gives the uniqueness and well-posedness of the solution operator $U$. To verify its Lipschitz continuity as a map from $L^2$ to $H^2\cap H^1_0$, it is sufficient to express $\Dx \bar u$ from the equation and use the above obtained estimate for the $H^1$-norm for estimating the right-hand side. Thus, the desired Lipschitz continuity for $s=0$ is proved and for $s>0$ it can be verified using the bootstrapping arguments. So, the lemma is proved.
\end{proof}
Using the proved lemma, we may write the equation \eqref{4.fD} as follows
\begin{equation}
\Dx u-u-Nu-f(u,\Nx u)=w-Nu:=h,\ \ u\big|_{x=0}=u\big|_{x=L}=0
\end{equation}
and write
\begin{equation}\label{4.trans}
u=\Upsilon(w-Nu),\ \ \Nx u=\Nx \Upsilon(w-Nu):=\Bbb T_1(u,w).
\end{equation}
Crucial for us is the fact that, in contrast to the operator $\Nx: u\to\Nx u$, the operator $\Bbb T_1$ does not decrease the smoothness, but even increases it. Indeed, according to the lemma, this operator acts from $[H^s(0,L)]^{2m}$ to $[H^{s+1}(0,L)]^m\subset [W^{s,\infty}(0,L)]^m$. This allows us to replace $\Nx u$ by $\Bbb T_1(u,w)$ in equations \eqref{4.first} and \eqref{4.second} and obtain the terms which do not decrease smoothness.
\par
To express the derivative $\Nx w$ in analogous way, we differentiate \eqref{4.trans} in time using that $\Dt u=w$ and $\Dt w=\theta$ to obtain
\begin{multline}\label{4.trans1}
w=\Dt \Upsilon(w-Nu)=\Upsilon'(w-Nu)(\theta-Nw),\\ \Nx w=\Nx[\Upsilon'(w-Nu)(\theta-Nw)]:=\Bbb T_2(u,w,\theta),
\end{multline}
where the operator $\Bbb T_2$ acts from $[H^s(0,L)]^{3m}$ to $[H^{s+1}(0,L)]^m$ and, therefore, does not decrease smoothness as well (actually, this operator is even linear in $\theta$, but we will not need this fact in the sequel).
\par
Inserting the above operators into equations \eqref{4.fD}, \eqref{4.first} and \eqref{4.second}, we arrive at the desired system of nonlocal RDA equations:
\begin{equation}\label{4.izvr}
\begin{cases}
\Dt u+f(u,\Bbb T_1(u,w))=\Dx u-u,\ \ u\big|_{x=0}=u\big|_{x=L}=0,\\
\Dt w+f'_u(u,\Bbb T_1(u,w))w+\\\phantom{eggogeggogeggog}+f'_{\Nx u}(u,\Bbb T_1(u,w))\Bbb T_2(u,w,\theta)=\Dx w-w,\ \ w\big|_{x=0}=w\big|_{x=L}=0,\\
\Dt\theta+f''_{u,u}(u,\Bbb T_1(u,w))[w,w]+2f''_{u,\Nx u}(u,\Bbb T_1(u,w))[w,\Bbb T_2(u,w,\theta)]+\\\phantom{eggogeggogeggog}+f''_{\Nx u,\Nx u}(u,\Bbb T_1(u,w))[\Bbb T_2(u,w,\theta),\Bbb T_2(u,w,\theta)]+\\\phantom{eggog}+f'_u(u,\Bbb T_1(u,w))\theta+f'_{\Nx u}(u,\Bbb T_1(u,w))\Nx\theta=\Dx\theta-\theta,\ \ \theta\big|_{x=0}=\theta\big|_{x=L}=0.
\end{cases}
\end{equation}
We see that all of the nonlinearities in this system except of the term $f'_{\Nx u}(u,\Bbb T_1(u,w))\Nx\theta$ map $H^1(0,L)$ to $H^1(0,L)$ and, therefore, do not decrease smoothness. Moreover, the matrix $f'_{\Nx u}(u,\Bbb T_1(u,w))$ in the last term does not depend explicitly on $\theta$, so arguing as before, we may make it {\it small} by the transform $\theta=av$ which changes only the $\theta$ component and remains the $u$ and $w$ components unchanged. Thus, we have proved the following theorem.

\begin{theorem}\label{Th4.fin} Let the function $f$ be of class $C_0^\infty$. Then the system \eqref{4.fD} or RDA equations possesses an IM in the phase space $[H^1_0(0,L)]^m$ in the sense of Definition \ref{Def.IMext}.
\end{theorem}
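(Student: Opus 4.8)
The plan is to treat the enlarged system \eqref{4.izvr} as a genuine RDA system of the type already analysed and to deduce an IM for \eqref{4.fD} through Definition \ref{Def.IMext}; the whole argument is a careful assembling of the reduction used for \eqref{4.N} together with the change of variables of Section \ref{s1} and the spectral gap construction of Theorem \ref{Th1}. First I would fix a smooth invariant absorbing set $\mathcal B$ for \eqref{4.fD} (bounded in a high-order Sobolev space, which is available by parabolic smoothing as in Proposition \ref{Prop2.attr}) and define the embedding
$$
\bar E(u):=(u,\Dt u,\ptt u),
$$
where $\Dt u$ and $\ptt u$ are expressed from \eqref{4.fD} and its time derivative, so that $\bar E$ is well defined and, on $\mathcal B$, bi-Lipschitz onto its image, which is precisely the invariant submanifold $\{w=\Dt u,\ \theta=\ptt u\}$ of \eqref{4.izvr}. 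Since the products $[w,w]$, $[w,\Bbb T_2]$ and $[\Bbb T_2,\Bbb T_2]$ in the third equation are not compactly supported, I would cut them off outside a large ball — exactly as in the passage leading to \eqref{4.N} — so as to obtain a globally well-posed, dissipative semigroup on $\Psi:=[H^1_0(0,L)]^m\times[H^1_0(0,L)]^m\times[H^1_0(0,L)]^m$ agreeing with the original flow near the attractor.

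The decisive point, already prepared in the text, is that $\Bbb T_1$ and $\Bbb T_2$ are \emph{smoothing}: by Lemma \ref{Lem4.dxu} the operator $\Upsilon$ gains two derivatives, so $\Bbb T_1:[H^s]^{2m}\to[H^{s+1}]^m$ and $\Bbb T_2:[H^s]^{3m}\to[H^{s+1}]^m$, and every former occurrence of $\Nx u$ or $\Nx w$ has been replaced by a term that raises, rather than lowers, the regularity. I would check that, after the cut-off, all nonlinearities in \eqref{4.izvr} except the single convective term $f'_{\Nx u}(u,\Bbb T_1(u,w))\Nx\theta$ define globally Lipschitz maps from $H^1_0$ into $H^1$, i.e. they obey a bound of the form \eqref{41} with constant $C_K$. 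The matrix coefficient $f'_{\Nx u}(u,\Bbb T_1(u,w))$ of the surviving convective term does not depend on $\theta$, so one is in the linear situation of \eqref{21}: transforming only the $\theta$-component via $\theta=a\,v$, with $a=a(u,w)$ solving the analogue of \eqref{21} driven by $f'_{\Nx u}$ with its arguments smoothed by $P_K$, leaves a residual convective coefficient of the form $a^{-1}[f'_{\Nx u}(P_K\,\cdot)-f'_{\Nx u}(\cdot)]a$. As in Lemma \ref{Lem5}, this residue has $L^\infty$-norm $O(K^{-1/2})$, while the $\Dt a$ and $\Dx a$ terms it generates remain Lipschitz from $H^1_0$ to $H^1$ precisely because $a$ is driven by the low Fourier modes of its arguments (cf. \eqref{30}--\eqref{33}); the $u$- and $w$-equations are left untouched by the transform.

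At this point the transformed enlarged system has exactly the structure of \eqref{39}: a convective nonlinearity $\Cal F_1$ losing one derivative with Lipschitz constant $L_1=CK^{-1/2}$, and the remaining nonlinearity $\Cal F_2$ bounded from $H^1_0$ to $H^1_0$ (or to $H^{1/4}$, handled as in Remark \ref{Rem3.gen} when the Dirichlet boundary conditions are not preserved) with constant $L_2=C_K$. I would then repeat the Perron argument of Theorem \ref{Th1} verbatim, first fixing $K$ large so that the first spectral gap condition in \eqref{4.f12} holds for all $n$, and then choosing $n$ large, using $\lambda_{n+1}-\lambda_n\sim Cn$, to secure the second; the contraction \eqref{4.back2} produces a $C^{1+\eb}$ inertial manifold with exponential tracking for the transformed enlarged system. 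Composing $\bar E$ with the $\theta$-change of variables gives the bi-Lipschitz map $E$ of Definition \ref{Def.IMext}, and the theorem follows.

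I expect the genuine difficulty to lie not in the spectral gap step, which is mechanical once the reduction is complete, but in the bookkeeping of the reduction itself: verifying that $\bar E$ is bi-Lipschitz onto an invariant submanifold (which forces one to work on a sufficiently smooth absorbing set so that $\ptt u$ is controlled) and, above all, establishing the smoothing and global Lipschitz bounds for the nonlocal, time-differentiated operator $\Bbb T_2$ built from $\Upsilon'$, uniformly on the absorbing ball and after the cut-off. Controlling $\Bbb T_2$ is the analogue here of the estimates for $\Dt a$ in Lemma \ref{Lem4}, and is where I would concentrate the technical effort.
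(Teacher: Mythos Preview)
Your proposal is correct and follows essentially the same route as the paper: the paper's proof is precisely the discussion preceding the theorem, where the enlarged time-differentiated system \eqref{4.izvr} is set up via Lemma \ref{Lem4.dxu}, the smoothing of $\Bbb T_1,\Bbb T_2$ is noted, and only the $\theta$-component is transformed via $\theta=av$ (with $a$ depending on $(u,w)$ through the $P_K$-smoothed coefficient $f'_{\Nx u}$, exactly as you describe) so that Theorem \ref{Th1} applies. Your identification of the technical weight---the bi-Lipschitz embedding $\bar E$ on a sufficiently smooth absorbing set and the mapping properties of $\Bbb T_2$---is accurate, and the paper leaves these points at the level of ``arguing as before''.
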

\bibliography{References}

\end{document}